\numberwithin{equation}{section}
\newcommand{\R}{\mathbbm{R}}
\newcommand{\N}{\mathbbm{N}}
\newcommand{\cG}{\mathcal{G}}
\newcommand{\co}{\text{cor}}
\newcommand{\email}[1]{\protect\href{mailto:#1}{#1}}
\newtheorem{theorem}{Theorem}[section]
\newtheorem{remark}{Remark}[section]
\newtheorem{cor}{Corollay}[section]
\newtheorem{lem}{Lemma}[section]
\newtheorem{prop}{Proposition}[section]
\title{Hyperbolicity-preserving and well-balanced stochastic Galerkin method for shallow water equations\thanks{{\bf Funding:} A.~Narayan was partially supported by NSF DMS-1848508.}}
\author{Dihan Dai\thanks{Department of Mathematics, University of Utah, Salt Lake City, UT 84112 (\email{dai@math.utah.edu}, \email{epshteyn@math.utah.edu}).}
\and Yekaterina Epshteyn \footnotemark[2]
\and Akil Narayan\footnotemark[2] \thanks{Scientific Computing and Imaging (SCI) Institute, University of Utah, Salt Lake City, UT 84112 (\email{akil@sci.utah.edu}).}}
\begin{document}

\maketitle

\begin{abstract}
  A stochastic Galerkin formulation for a stochastic system of balanced or conservation laws may fail to preserve hyperbolicity of the original system. In this work, we develop a hyperbolicity-preserving stochastic Galerkin formulation for the one-dimensional shallow water equations by carefully selecting the polynomial chaos expansion of the nonlinear $q^2/h$ term in terms of the polynomial chaos expansions of the conserved variables. In addition, in an arbitrary finite stochastic dimension, we establish a sufficient condition to guarantee hyperbolicity of the stochastic Galerkin system through a finite number of conditions at stochastic quadrature points.
  Further,  we develop a well-balanced central-upwind scheme for the stochastic shallow water model and derive the associated hyperbolicty-preserving CFL-type condition. The performance of the developed method is illustrated on a number of challenging numerical tests.
  
  \vspace{2em}
  \noindent{\bf Key Words:}  finite volume method, stochastic Galerkin method, shallow water equations, hyperbolic systems of conservation law and balance laws.
  
  \vspace{2em}
  \noindent{\bf AMS subject classifications:} 35L65, 35Q35, 35R60, 65M60, 65M70
\end{abstract}

\section{Introduction}
The classical one-dimensional deterministic Saint-Venant system of
shallow water equations is,
\begin{equation}\label{eq:swed1}
    \begin{aligned}
        &(h)_t+(q)_x = 0,\\
        &(q)_t+\left(\frac{q^2}{h}+\frac{1}{2}gh^2\right)_x = -ghB_x,
    \end{aligned}
\end{equation}
where $h=h(x,t)$ is the water height, $q = q(x,t)$ is the water
discharge, $g$ is the gravitational constant, and $B=B(x)$ is the
time-independent bottom topography. This system was first derived in
\cite{de1871theorie} and since then has been widely used in modeling
the flows whose horizontal scales are significantly larger than 
vertical scales, such as water flows in rivers, lakes and coastal
areas. However, the accuracy and prediction capabilities of shallow
water models depend strongly on the presence of various uncertainties that naturally arise in
measuring or empirically approximating, e.g., the bottom topography data,
or initial and boundary conditions. Hence,  it is important to
consider a stochastic version of the shallow water equations (SWE). In this work we focus on uncertainty that results in \emph{parameterized}
SWE, where parameters are
modeled as random variables. In particular, we study the polynomial chaos
expansion (PCE) strategy, which is very effective when quantities of interest vary
smoothly with respect to the parameters.

There are two widely used classes of methods for addressing uncertainty in (parameterized) partial differential equations using PCE. 
One class, of \emph{non-intrusive} type methods, computes stochastic quantities by
generating an ensemble of solutions of realizations, each of which may be treated as a deterministic problem. Statistical
information is obtained from this ensemble by post-processing the ensemble
solutions. Examples of such methods include Monte-Carlo-type methods that use
randomly selected samples, and the stochastic collocation methods that use
\textit{a priori} pre-selected samples (e.g., \cite{xiu2005high, nobile2008sparse, mishra2012multilevel}).
Since they rely on multiple queries of existing deterministic solvers,
non-intrusive methods are easy to implement and highly parallelizable, but can
result in less accurate approximations than the intrusive type methods.

The other group of methods are \emph{intrusive} methods. Such methods typically
require a substantial rewrite of legacy code and solvers. In the context of PCE
methods, the prototypical intrusive strategy is the stochastic Galerkin (SG)
approach, wherein one replaces an underlying stochastic process with its truncated PCE \cite{wiener1938homogeneous,xiu2002wiener}, and then
forms a system of differential equations via Galerkin projection in stochastic space. As a
consequence,  one derives a new system of partial differential equations whose unknowns are
(time- and space-varying) coefficients of the PCE. Intrusive methods are
projection-based approximations, and thus their accuracy is near-optimal in an $L^2$
sense for static problems.  Discussion on the existing convergence theory for SG
methods can be found, for example  in \cite{babuska2004galerkin,
le2010spectral}. SG methods have been successfully employed for modeling
uncertainty in diffusion models \cite{xiu2009efficient,
eigel2014adaptive}, kinetic equations
\cite{hu2016stochastic,shu2017stochastic}, and conservation and
balanced laws with symmetric Jacobian matrices
\cite{tryoen2010intrusive}.

For hyperbolic systems, such as the SWE, the associated SG system may not be hyperbolic in general \cite{despres2013robust, jin2019study}. Thus, the intrusive SG formulation can result in a system of differential equations of a different class than the original deterministic system. There are currently several efforts to resolve this issue for more general types of equations and to preserve hyperbolicity of the SG system. For quasilinear hyperbolic systems, hyperbolicity can be ensured by multiplying the SG formulation of the system by the left eigenvector matrix of its flux Jacobian matrix \cite{wu2017stochastic}. Unfortunately this transformation results in a non-conservative form and numerical solvers designed
for conservative formulations cannot be applied directly. A recent operator-splitting based approach has been developed for both the Euler equations \cite{chertock2015operator} and the SWE \cite{chertock2015welluq}, where the original systems are split into hyperbolic subsystems whose SG formulations remain hyperbolic. However, this may still lead to complex eigenvalues due to the mismatch in hyperbolicity sets of the subsystems \cite{schlachter2018hyperbolicity}. Another strategy to resolve the hyperbolicity issue of SG formulation is to introduce an appropriate change of variables. For example, the SG system of  balanced/conservation laws in terms of entropic variables can be shown to be hyperbolic \cite{poette2009uncertainty,poette2019contribution}. In addition, an optimization-based method, called the intrusive polynomial moment method (IPMM), was proposed to calculate the PCE of entropic variables given the PCE of the conserved variables \cite{despres2013robust,poette2009uncertainty,poette2019contribution}. However, the optimization problem in IPMM that must be solved for each cell and at each time step can be computationally expensive. There are also strategies that employ Roe variable formulations: In \cite{pettersson2014stochastic,gerster2019hyperbolic,gerster2020entropies}, the flux of the SG system is constructed using Roe variables and the conservative form of the system is preserved. It has been shown that both the SG formulations of the Euler equation \cite{pettersson2014stochastic} and the SWE \cite{gerster2019hyperbolic} in terms of Roe variables are hyperbolic when using a Wiener-Haar expansion. The SG formulation of the isothermal Euler equations in terms of Roe variables is hyperbolic for any basis function under a positive definiteness condition \cite{gerster2019hyperbolic}. However, it can still be expensive to implement the Roe formulation since the PCE of Roe variables need to be calculated by solving both a nonlinear equation and a linear equation.

The SG formulation of the SWE may not be hyperbolic due to the PCE of the nonlinear, non-polynomial term $q^2/h$ \cite{despres2013robust}. This issue can be partially resolved by using the Roe variables and the Wiener-Haar expansion\cite{gerster2019hyperbolic,gerster2020entropies}. In this work, we develop hyperbolicity-preserving SG PCE formulation for the SWE by carefully selecting the PCE of $q^2/h$ term using only the PCE of the conserved variables. Further, we establish a connection between the hyperbolicity of the SG system and the original system. Namely, we show that preserving positivity of the water height 
a finite number of stochastic quadrature points is sufficient to preserving hyperbolicity of the SG formulation of the SWE. In addition, we will present the well-balanced discretization for our SG formulation of SWE, which preserves positivity of the water height at certain quadrature points in the stochastic domain. In this paper, we adopt the filter from \cite{schlachter2018hyperbolicity} to ensure the positivity-preserving property of the algorithm at stochastic quadrature points, which is one ingredient for ensuring hyperbolicity. However, one can go further in filtering. For example, recent work \cite{kusch2020filtered} utilizes a more sophisticated Lasso-regression-based filter to reduce oscillations of the numerical solution at shocks in the spatial domain. \par In this work, we consider central-upwind scheme as an example of the underlying numerical scheme for the stochastic shallow water equations.  However, the main ideas developed in this work are independent of the particular choice of the numerical solver for hyperbolic problems and can be employed with various choices of the numerical schemes for hyperbolic problems. The central Nessyahu-Tadmor schemes, their generalization into higher resolution central schemes and semi-discrete central-upwind schemes are a class of robust Godunov-type Riemann problem-free projection-evolution methods for hyperbolic systems. They were originally developed in \cite{nessyahu1990non,kurganov2000new,kurganov2001semidiscrete}. The family of central-upwind schemes has been successfully applied to problems in science and engineering, and in particular, to deterministic SWE and related models. A second-order central-upwind scheme was first extended to SWE in \cite{kurganov2002central}. However, the scheme did not simultaneously satisfy the positivity-preserving and well-balanced properties. It was improved in \cite{kurganov2007second} where the developed method captures the ``lake-at-rest'' steady state and preserves positivity of the water height. We refer the interested reader to \cite{kurganov2007adaptive,kurganov2007reduction,bryson2011well,chertock2015well,liu2018well,kurganov2018finite} for examples of other closely related works. The numerical scheme developed in this work is mainly based on further extension to stochastic SWE of the framework proposed in \cite{kurganov2001semidiscrete, kurganov2007second}. 

This paper is organized as follows. In \cref{sec:model}, we introduce the stochastic SWE and the SG discretization of the system using a particular choice of the PCE for $q^2/h$. In \cref{sec:hyperbolicity}, we discuss the hyperbolicity of the SG system obtained in \cref{sec:model} and present a sufficient condition to guarantee hyperbolicity of the SG SWE system. In \cref{sec:scheme}, we present a well-balanced central-upwind scheme for the SG SWE model and derive a hyperbolicty-preserving CFL-type condition. In \cref{sec:results}, we illustrate the robustness of the developed numerical scheme with several challenging tests. 

\section{Modeling Stochastic Shallow Water Equations}\label{sec:model}
This section sets up the stochastic SWE problem and introduces notation used in this article. 

\subsection{Stochastic modeling of the SWE}
We consider a complete probability space $(\Omega, \mathcal{F}, P)$, with event space $\Omega$, $\sigma$-algebra $\mathcal{F}$, and probability measure $P$. For $\omega \in \Omega$, a stochastic version of \eqref{eq:swed1} is
\begin{equation}\label{eq:swesg11}
    \begin{aligned}
        &(h(x,t,\omega))_t+(q(x,t,\omega))_x = 0,\\
        &(q(x,t,\omega))_t+\left(\frac{q^2(x,t,\omega)}{h(x,t,\omega)}+\frac{1}{2}gh^2(x,t,\omega)\right)_x = -gh(x,t,\omega)B_x(x,\omega),
    \end{aligned}
\end{equation}
where uncertainty enters the equation through, e.g., a stochastic model of the initial conditions or of the bottom topography $B$. Here, we present a stochastic model of the bottom topography. However, all our results generalize to other models of uncertainty (e.g., in the initial conditions). We model $B$ as a finite-dimensional random field,
\begin{align*}
  B = B(x,\xi) = B_0(x) + \sum_{k=1}^d B_k(x) \xi_k,
\end{align*}
where $\xi = (\xi_1, \ldots, \xi_d)$ is a $d$-dimensional random variable. Such a model can result, for example, from truncation of an infinite-dimensional Karhunen-Lo\'eve decomposition.
Under this model, the stochastic SWE model \eqref{eq:swesg11} can be written as a function of $\xi$,
\begin{equation}\label{eq:swesg1}
    \begin{aligned}
        &(h(x,t,\xi))_t+(q(x,t,\xi))_x = 0,\\
        &(q(x,t,\xi))_t+\left(\frac{q^2(x,t,\xi)}{h(x,t,\xi)}+\frac{1}{2}gh^2(x,t,\xi)\right)_x = -gh(x,t,\xi)B_x(x,\xi),
    \end{aligned}
\end{equation}
which, for the purposes of this paper, forms the continuous model problem for which we seek to compute numerical solutions.


\subsection{Polynomial chaos expansions}
We assume that the random variable $\xi$ has a Lebesgue density $\rho: \R^d \rightarrow \R$. Polynomial chaos expansions (PCE) seek to approximate dependence on $\xi$ by a polynomial function of $\xi$. With $\nu = (\nu_1, \ldots, \nu_d) \in \N_0^d$ a multi-index, then for $\zeta \in \R^d$ we adopt the standard notation,
\begin{align*}
  \zeta^\nu &\coloneqq \prod_{j=1}^d \zeta_j^{\nu_j}, & \zeta^0 = \zeta^{(0,0,\ldots,0)} &= 1.
\end{align*}
We let $\Lambda \subset \N_0^d$ denote any non-empty, size-$K$ finite set of multi-indices. We will assume throughout that $0 = (0,0,\cdots,0) \in \Lambda$. 
Our PCE approximations will take place in a polynomial subspace defined by $\Lambda$:
\begin{align*}
  P_\Lambda &= \mathrm{span} \{ \zeta^\nu\;\; \big|\;\; \nu \in \Lambda\}, & \dim P_\Lambda &= K \coloneqq |\Lambda|.
\end{align*}
We will also need ``powers" of this set, defined by $r$-fold products of $P_\Lambda$ elements:
{\footnotesize
\begin{align}\label{eq:PL}
 P_\Lambda^r &\coloneqq \mathrm{span} \left\{ \prod_{j=1}^r p_j \;\; \big|\;\; p_j \in P_\Lambda, \; j = 1,\ldots, r \right\}, & \dim P_\Lambda^r &\leq \left(\!\!\left(\begin{array}{c} K \\ r \end{array}\right)\!\!\right) = \left(\begin{array}{c} K+r-1 \\ r \end{array}\right),
\end{align}
}%
where the dimension bound results from a combinatoric argument. Note that since $0 \in \Lambda$, then $P^r_\Lambda \subseteq P^s_\Lambda$ for any $r \leq s$.
We will later exercise the notation above for $r = 3$. If $\rho$ has finite polynomial moments of all orders, then 
there is an $L^2_\rho(\R^d)$-orthonormal basis $\{\phi_k\}_{k=1}^\infty$ of $P_\Lambda$, i.e., 
\begin{align}\label{eq:orthocond}
  \langle\phi_k,\phi_\ell\rangle_{\rho} &\coloneqq \int_\R \phi_k(s) \phi_\ell(s) {\rho}(s)d s  = \delta_{k \ell}, & \phi_1(\xi) &\equiv 1,
\end{align}
for all $k, \ell \in \{1, \ldots, K\}$, with the latter identification of $\phi_1$ being an assumption we make without loss since $0 \in \Lambda$.
If $y(x,t,\cdot) \in L^2_\rho(\R)$, then under mild conditions on the probability measure $\rho$ (see \cite{ernst_convergence_2012}) there exists a convergent expansion of $y$ in these basis functions,
\begin{align*}
  y(x,t,\cdot) &\stackrel{L^2_\rho}{=} \sum_{k=1}^\infty \hat{y}_k(x,t) \phi_k(\cdot), 
\end{align*}
where $\hat{y}_{k}(x,t)$ are (stochastic) Fourier coefficients in the basis $\{\phi_k\}_{k \in \N}$, and $\{\phi_\ell\}_{\ell > K}$ are any $L^2_\rho(\R^d)$-orthonormal basis for the orthogonal complement of $P_\Lambda$ in the space of all $d$-variate polynomials. A $K$-term $P_\Lambda$ PCE \emph{approximation} of the stochastic process $y$ is then formed by truncating the summation above to terms in $P_\Lambda$:
\begin{equation}\label{eq:PCEex} 
    y(x,t,\xi) \approx\sum_{k=1}^K \hat{y}_k(x,t)\phi_k(\xi) =:\mathcal{G}_\Lambda[y](x,t,\xi).
\end{equation}
Above, we have defined the linear projection operator $\mathcal{G}_\Lambda : L^2_\rho \rightarrow P_\Lambda$.


\subsection{Operations on Truncated PCE Expansions}
Polynomial statistics of PCE expansions can be computed from a straightforward manipulation of their coefficients. For example, 
\begin{equation}\label{eq:expvar}
  \mathbb{E}[\mathcal{G}_\Lambda[y](x,t,\xi)] = \hat{y}_1(x,t),\quad \text{Var}[\mathcal{G}_\Lambda[y](x,t,\xi)] = \sum_{k=2}^{K}\hat{y}_k^2(x,t),
\end{equation}
where $\mathbb{E}$ is the expectation operator, and $\text{Var}$ is the variance. 
In contrast, computing PCE expansions of nonlinear expressions is more complicated. To calculate the $P_\Lambda$-truncated PCE of the product of two stochastic processes $y(x,t,\xi)$ and $z(x,t,\xi)$, we introduce the notation
\begin{align}\label{eq:mult-assump}
\mathcal{G}_\Lambda[y,z]
  &\coloneqq \mathcal{G}_\Lambda \left[ \mathcal{G}_\Lambda [y]\; \mathcal{G}_\Lambda[z] \right] 
  = \sum_{m=1}^{K}\left(\sum_{k,\ell=1}^K\hat{y}_k\hat{z}_\ell\langle\phi_k\phi_\ell,\phi_m\rangle_{\rho}\right)\phi_m(\xi).
\end{align}
The approximation above defines the \emph{pseudo-spectral product}, which is a widely used strategy for computing PCE expansion products (e.g. \cite{debusschere2004numerical}\cite{gerster2019hyperbolic}). The pseudo-spectral product is an exact projection onto $P_\Lambda$ of the product of two $P_\Lambda$ projections. Such an operation can be cast in linear algebraic terms by considering vectors comprised of the PCE expansion coefficients. Given $y \in P_\Lambda$, we will hereafter let $\hat{y} \in \R^K$ denote its $\phi_k$-expansion coefficients. We now introduce the linear operator $\mathcal{P}: \R^{K} \rightarrow \R^{K\times K}$, 
\begin{align}\label{eq:pmatrix}
  \mathcal{P}(\hat{y}) &\coloneqq \sum_{k=1}^K\hat{y}_k\mathcal{M}_k, &
  \mathcal{M}_k &\in \R^{K \times K}, &
  (\mathcal{M}_k)_{\ell m} &= \langle\phi_k,\phi_\ell\phi_m\rangle_{\rho},
\end{align}
where $\mathcal{M}_k$ is a symmetric matrix for each $k$.
The following properties hold:
\begin{align}\label{eq:pmatrixproperty}
  \mathcal{P}(\hat{y}) &= \begin{pmatrix}\mathcal{M}_1\hat{y}|\mathcal{M}_2\hat{y}|\cdots |\mathcal{M}_K\hat{y} \end{pmatrix}, & 
  \mathcal{P}(\hat{y})\hat{z} &= \mathcal{P}(\hat{z})\hat{y}, & 
  \widehat{\cG_\Lambda[y, z]} = \mathcal{P}(\hat{y})\hat{z}, 
\end{align}
where the last property is due to \eqref{eq:mult-assump}, and allows us to conclude the following.
\begin{lem}\label{lemma:triple-product}
  Let $a(\xi), b(\xi), c(\xi) \in P_{\Lambda}$ have $\phi_j$-expansion coefficients $\hat{a}, \hat{b}, \hat{c} \in \R^K$, respectively. Then  $\left\langle a, b\, c \right\rangle_{\rho} = \hat{a}^T \mathcal{P}(\hat{b}) \hat{c}$.
\end{lem}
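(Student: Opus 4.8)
The plan is to expand all three functions in the orthonormal basis $\{\phi_j\}$ and reduce both sides of the claimed identity to the same triply-indexed sum over the structure constants $\langle \phi_j, \phi_\ell \phi_m\rangle_{\rho}$. First I would write $a = \sum_j \hat{a}_j \phi_j$, $b = \sum_\ell \hat{b}_\ell \phi_\ell$, and $c = \sum_m \hat{c}_m \phi_m$, and use bilinearity of $\langle\cdot,\cdot\rangle_{\rho}$ to get
\[
  \langle a, b\,c\rangle_{\rho} = \sum_{j,\ell,m} \hat{a}_j \hat{b}_\ell \hat{c}_m \langle \phi_j, \phi_\ell \phi_m\rangle_{\rho}.
\]
For the right-hand side, substituting $\mathcal{P}(\hat{b}) = \sum_k \hat{b}_k \mathcal{M}_k$ together with the entry formula $(\mathcal{M}_k)_{\ell m} = \langle \phi_k, \phi_\ell \phi_m\rangle_{\rho}$ from \eqref{eq:pmatrix} produces
\[
  \hat{a}^T \mathcal{P}(\hat{b})\,\hat{c} = \sum_{k,\ell,m} \hat{a}_\ell\, \hat{b}_k\, \hat{c}_m \langle \phi_k, \phi_\ell \phi_m\rangle_{\rho},
\]
so the task reduces to matching these two sums.

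The key observation, which is really the only content, is that the structure constant $\langle \phi_k, \phi_\ell \phi_m\rangle_{\rho} = \int_{\R^d} \phi_k \phi_\ell \phi_m\, \rho$ is invariant under every permutation of its three indices, being the integral of a product. Invoking this full symmetry, I would relabel the dummy index carrying $\hat{a}$ (currently $\ell$) and the one carrying $\hat{b}$ (currently $k$) and use $\langle \phi_\ell, \phi_k \phi_m\rangle_{\rho} = \langle \phi_k, \phi_\ell \phi_m\rangle_{\rho}$ to recast the second sum as the first, thereby identifying the two expressions. I expect the main (and essentially only) obstacle to be the bookkeeping of which coefficient vector pairs with which basis slot: in $\hat{a}^T \mathcal{P}(\hat{b})\,\hat{c}$ the vector $\hat{b}$ occupies the distinguished first argument of the triple product while $\hat{a}$ plays the role of a matrix-row index, whereas in $\langle a, bc\rangle_{\rho}$ it is $\hat{a}$ that occupies the distinguished slot. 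Reconciling this is purely a matter of the permutation symmetry just noted; no analytic estimates are involved.

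Alternatively, and perhaps more cleanly, I could bypass index manipulation entirely by appealing to the already-established identity $\widehat{\cG_\Lambda[b,c]} = \mathcal{P}(\hat{b})\,\hat{c}$ in \eqref{eq:pmatrixproperty}. Since $b,c \in P_\Lambda$ we have $\cG_\Lambda[b,c] = \cG_\Lambda[bc]$, the orthogonal projection of $bc$ onto $P_\Lambda$. Because $a \in P_\Lambda$ and $bc - \cG_\Lambda[bc]$ is orthogonal to $P_\Lambda$, orthogonality gives $\langle a, bc\rangle_{\rho} = \langle a, \cG_\Lambda[bc]\rangle_{\rho}$, and finally the orthonormality of $\{\phi_j\}$ (Parseval) yields $\langle a, \cG_\Lambda[bc]\rangle_{\rho} = \hat{a}^T \widehat{\cG_\Lambda[bc]} = \hat{a}^T\mathcal{P}(\hat{b})\,\hat{c}$. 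This route pushes all of the symmetry bookkeeping into the previously verified properties of $\mathcal{P}$, at the cost of relying on the projection interpretation of $\cG_\Lambda$.
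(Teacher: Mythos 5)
Both of your routes are correct. Your ``alternative'' argument is in fact precisely the paper's proof: the authors write $\left\langle a, b\,c \right\rangle_{\rho} = \left\langle \cG_\Lambda[b,c], a \right\rangle_{\rho} = \hat{a}^T \widehat{\cG_\Lambda[b,c]} = \hat{a}^T \mathcal{P}(\hat{b})\hat{c}$, which is exactly your chain of projection orthogonality (valid because $a \in P_\Lambda$), Parseval, and the identity $\widehat{\cG_\Lambda[b,c]} = \mathcal{P}(\hat{b})\hat{c}$ from \eqref{eq:pmatrixproperty}. Your primary route --- expanding all three functions and invoking the full permutation symmetry of the structure constants $\langle \phi_k, \phi_\ell\phi_m\rangle_{\rho} = \int \phi_k\phi_\ell\phi_m\,\rho$ --- is a genuinely self-contained alternative: it never mentions the projection $\cG_\Lambda$ and makes explicit the index bookkeeping ($\hat{b}$ sits in the distinguished slot of $\mathcal{P}$ while $\hat{a}$ sits there in the inner product) that the paper's proof hides inside the previously verified properties of $\mathcal{P}$. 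The paper's version is shorter and reuses established machinery; yours exposes the underlying symmetry of the triple product, which is the real reason the identity holds and the reason the argument of $\mathcal{P}$ can be any of the three vectors. Either proof is acceptable; no gaps.
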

\begin{proof}
  Since $a \in P_\Lambda$, then 
  \begin{align*}
    \left\langle a, b\, c \right\rangle_{\rho} =  \left\langle b\, c, a \right\rangle_{\rho} = \left\langle \cG_\Lambda[b,c], a \right\rangle_{\rho} = \hat{a}^T \widehat{\cG_\Lambda[b, c]} \stackrel{\eqref{eq:pmatrixproperty}}{=} \hat{a}^T \mathcal{P}(\hat{b})\hat{c}.
  \end{align*}
\end{proof}

We will also need to compute $P_\Lambda$ truncations of ratios of processes (when for each $(x,t)$ the denominator is a single-signed process with probability 1). We start by noting the following exact representation when $y$ is a single-signed process:
\begin{equation}\label{eq:prod-assump}
    \mathcal{G}_\Lambda\left[y\,\frac{z}{y}\right](x,t,\xi) =\mathcal{G}_\Lambda[z](x,t,\xi).
\end{equation}
We then use this to motivate the assumption,
\begin{align}\label{eq:div-assump}
    \mathcal{G}_\Lambda\left[y,\frac{z}{y}\right] = \mathcal{G}_\Lambda[z]
    \hskip 5pt \stackrel{\eqref{eq:pmatrixproperty}}{\Longleftrightarrow} \hskip 5pt
    \mathcal{P}(\hat{y}) \widehat{\left(\frac{z}{y}\right)} = \hat{z}.
\end{align}
This expression motivates the following definition for a new operator $\mathcal{G}^{\dagger}_\Lambda\left[\frac{z}{y}\right]$:
\begin{equation}
    \mathcal{G}^{\dagger}_\Lambda\left[\frac{z}{y}\right](\xi) \coloneqq \sum_{k=1}^{K}c_k\phi_k(\xi),
\end{equation}
  where $c_i$ is the $i$th element of $\widehat{\left(\frac{z}{y}\right)}$ defined by \eqref{eq:div-assump}, 
  assuming $\mathcal{P}(\hat{y})$ is invertible. 

\subsection{Stochastic Galerkin Formulation for Shallow Water Equations}
We start with \eqref{eq:swesg1} and perform a standard Galerkin procedure in stochastic ($\xi$) space using polynomials from $P_\Lambda$. I.e., the first step is to replace $h$ and $q$ by the ansatz,
\begin{align}\label{eq:sg-ansatz}
  h \simeq h_\Lambda &\coloneqq \sum_{k=1}^K \hat{h}_j(x,t) \phi_j(\xi), &
  q \simeq q_\Lambda &\coloneqq \sum_{k=1}^K \hat{q}_j(x,t) \phi_j(\xi), 
\end{align}
respectively, and $B$ by $\cG_\Lambda[B]$. Following this, we apply the projection operator $\cG_\Lambda$ to both sides of \eqref{eq:swesg1} and insist on equality. However, in addition we make the following crucial assumption about how we approximate the term $q^2/h$,
\begin{align*}
  \frac{q^2}{h} = \frac{q}{h}\; q \hskip 10pt \longrightarrow \hskip 10pt 
  \cG_\Lambda\left[ \frac{q_\Lambda^2}{h_\Lambda}\right] = \cG_\Lambda \left[  q_\Lambda\; \cG^\dagger_\Lambda\left[\frac{q_\Lambda}{h_\Lambda}\right]\right]
\end{align*}
Performing these steps on \eqref{eq:swesg1} results in the system,
\begin{equation}\label{eq:swesg4}
\dfrac{\partial}{\partial t}\begin{pmatrix}\hat{h}\\\hat{q}\end{pmatrix}
+\frac{\partial}{\partial x}\begin{pmatrix}\hat{q}\\\frac{1}{2}g\mathcal{P}(\hat{h})\hat{h}+\mathcal{P}(\hat{q})\mathcal{P}^{-1}(\hat{h})\hat{q}\end{pmatrix}
=\begin{pmatrix}0\\-g\mathcal{P}(\hat{h})\widehat{B}_x\end{pmatrix},
\end{equation}
where $\hat{h}$ and $\hat{q}$ are each length-$K$ vectors whose entries are the coefficients introduced in \eqref{eq:sg-ansatz}.
With $\hat{U} \coloneqq (\hat{h}, \hat{q})^T$, and the flux and source terms
\begin{align}\label{eq:sweflt}
  F(\hat{U}) &= \begin{pmatrix}\hat{q}\\\frac{1}{2}g\mathcal{P}(\hat{h})\hat{h}+\mathcal{P}(\hat{q})\mathcal{P}^{-1}(\hat{h})\hat{q}\end{pmatrix}, &
    S(\hat{U},\hat{B})&=\begin{pmatrix}0\\-g\mathcal{P}(\hat{h})\widehat{B}_x\end{pmatrix},
\end{align}
then the system \eqref{eq:swesg4} can be written in general conservation law form,
\begin{equation}\label{eq:swesg5}
    \hat{U}_t+(F(\hat{U}))_x = S(\hat{U},\hat{B}),
\end{equation}
with flux Jacobian 
\begin{equation}\label{eq:jacobian1}
  J(\hat{U}) \coloneqq \frac{\partial F}{\partial\hat{U}}=\begin{pmatrix}O&I\\g\mathcal{P}(\hat{h})-\mathcal{P}(\hat{q})\mathcal{P}^{-1}(\hat{h})\mathcal{P}(\hat{u})&\mathcal{P}(\hat{u})+\mathcal{P}(\hat{q})\mathcal{P}^{-1}(\hat{h})\end{pmatrix},
\end{equation}
where we have introduced
\begin{equation}\label{eq:uPCE}
    \hat{u} = \mathcal{P}^{-1}(\hat{h})\hat{q},
\end{equation} 
which can be viewed as the PCE coefficient vector of the velocity $u:=\frac{q}{h}$. The computation that gives the expression \eqref{eq:jacobian1} for the Jacobian uses the property \eqref{eq:pmatrixproperty}. For more details, we refer interested readers to section 2.2 of \cite{jin2019study}. 

We emphasize that $(h,q)$ are the $(x,t,\xi)$-dependent solutions to the original stochastic SWE equations \eqref{eq:swesg1}, whereas $(h_\Lambda, q_\Lambda)$ are the $(x,t,\xi)$-dependent solutions to our SGSWE equations \eqref{eq:swesg5}. In general, these two solutions are distinct. We first articulate sufficient conditions under which \eqref{eq:swesg5} is a well-posed hyperbolic system.

\section{Hyperbolicity of The SG System}\label{sec:hyperbolicity}
In this section we show that the system \eqref{eq:swesg5} is hyperbolic under the condition that the matrix $\mathcal{P}(\hat{h})$ is positive definite. When there is no uncertainty, this condition reduces to $h>0$, which ensures  hyperbolicity for the deterministic shallow water equations \eqref{eq:swed1}.
\begin{theorem}\label{thm:hyperbolicity}
If the matrix $\mathcal{P}(\hat{h})$ is strictly positive definite, the SG formulation \eqref{eq:swesg5} is hyperbolic.
\end{theorem}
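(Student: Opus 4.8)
The plan is to prove hyperbolicity by exhibiting an explicit similarity transformation that conjugates the flux Jacobian $J(\hat U)$ of \eqref{eq:jacobian1} into a real symmetric matrix; once $J$ is similar to a symmetric matrix it is automatically diagonalizable with a real spectrum, which is exactly what hyperbolicity of \eqref{eq:swesg5} demands. Throughout I abbreviate $A := \mathcal{P}(\hat h)$, $V := \mathcal{P}(\hat u)$ and $Q := \mathcal{P}(\hat q)$; each is symmetric because every $\mathcal{M}_k$ is symmetric, and by hypothesis $A$ is strictly positive definite, so its symmetric square root $A^{1/2}$ and the inverses $A^{-1}, A^{-1/2}$ all exist. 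Writing $C := gA - QA^{-1}V$ and $D := V + QA^{-1}$ for the lower blocks, the Jacobian reads $J = \begin{pmatrix} O & I \\ C & D\end{pmatrix}$.

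First I would remove the velocity operator $V$ buried inside $C$ and $D$ using the block shear $\Phi := \begin{pmatrix} I & O \\ -V & I\end{pmatrix}$. A direct computation relying only on the two identities $D - V = QA^{-1}$ and $-V^2 + C + DV = gA$ (both immediate from the definitions of $C$ and $D$) gives
\begin{equation*}
  \Phi\, J\, \Phi^{-1} = \tilde J := \begin{pmatrix} V & I \\ gA & QA^{-1}\end{pmatrix}.
\end{equation*}
The purpose of this step is that the troublesome lower-left block of $J$ collapses to the symmetric positive definite matrix $gA$, leaving $QA^{-1}$ as the only non-symmetric block.

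Second I would symmetrize $\tilde J$ with the diagonal conjugation $\Theta := \operatorname{diag}(g^{-1/2} I,\, A^{1/2})$, which is invertible precisely because $A$ is SPD and $g>0$. Conjugating $V$ by the scalar block leaves it unchanged, conjugating $QA^{-1}$ by $A^{1/2}$ produces the symmetric $A^{-1/2}QA^{-1/2}$, and the scalar $g^{-1/2}$ is chosen exactly so that the two off-diagonal blocks coincide, yielding
\begin{equation*}
  \Theta^{-1}\, \tilde J\, \Theta = \begin{pmatrix} V & \sqrt{g}\,A^{1/2} \\ \sqrt{g}\,A^{1/2} & A^{-1/2} Q A^{-1/2}\end{pmatrix} =: \mathcal{S},
\end{equation*}
which is manifestly real and symmetric. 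Composing the two transformations shows $J$ is similar to $\mathcal{S}$, hence $J$ has a complete set of eigenvectors with real eigenvalues, establishing hyperbolicity.

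The main obstacle to anticipate is that the operators $\mathcal{P}(\cdot)$ do not commute and the underlying pseudo-spectral product is not associative, so one cannot argue by simultaneously diagonalizing $A$, $V$, $Q$, nor treat $QA^{-1}$ as symmetric. The substance of the argument is therefore to sidestep commutativity entirely: the shear $\Phi$ is engineered so that the sole non-symmetric block remaining is $QA^{-1}$, and that block is rendered symmetric purely by an $A^{1/2}$-congruence, which requires nothing beyond strict positive definiteness of $\mathcal{P}(\hat h)$ (and $g>0$). The only point I would verify with care is that the two algebraic identities behind $\Phi J\Phi^{-1} = \tilde J$ hold verbatim from the definitions \eqref{eq:jacobian1} and \eqref{eq:uPCE}, since the remaining manipulations are transparent.
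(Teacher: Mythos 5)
Your proof is correct, and it reaches the same destination as the paper's --- similarity of the flux Jacobian to a real symmetric matrix --- but via a different similarity transformation. I checked your two identities: with $C = gA - QA^{-1}V$ and $D = V + QA^{-1}$ one indeed gets $D - V = QA^{-1}$ and $C + DV - V^2 = gA$ with no commutativity assumptions, so $\Phi J \Phi^{-1} = \tilde J$ holds, and the diagonal congruence by $\Theta$ then produces the manifestly symmetric $\mathcal{S}$. The paper instead conjugates in one step by
$P_1 = \left(\begin{smallmatrix} I & I \\ B+G & B-G\end{smallmatrix}\right)$
with $G = \sqrt{g\mathcal{P}(\hat h)}$ and $B = \mathcal{P}(\hat u)$, which is the direct generalization of the deterministic eigenvector matrix for the characteristic speeds $u \pm \sqrt{gh}$; its payoff is that the resulting symmetric matrix is close to diagonal in the deterministic limit, cleanly exhibiting the wave structure. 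Your factored shear-plus-scaling is arguably more systematic (it would apply to any Jacobian in companion block form with an SPD lower-left ``pressure'' block) and has the incidental advantage that $\mathcal{P}(\hat u)$ appears verbatim as a diagonal block of $\mathcal{S}$, so the paper's subsequent Proposition on the eigenvalue bounds for $\mathcal{P}(\hat u)$ would follow immediately from Cauchy interlacing rather than a separate Rayleigh-quotient computation. Either route is a complete proof.
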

\begin{proof}
  We will show that the Jacobian $\frac{\partial F}{\partial \hat{U}}$ is diagonalizable with real eigenvalues. Since $\mathcal{P}(\hat{h})$ is positive definite, then define
  \begin{align}\label{eq:def-abg}
    G &\coloneqq \sqrt{g \mathcal{P}(\hat{h})}, & 
    A &\coloneqq g G^{-1} \mathcal{P}(\hat{q}) G^{-1}, & 
    B &\coloneqq \mathcal{P}(\hat{u}),
  \end{align}
  where $\sqrt{M}$ is the (unique) symmetric positive definite square root of a symmetric positive definite matrix $M$. Using these matrices, define
  \begin{align*}
    P_1 &\coloneqq \begin{pmatrix}I&I\\B+G&B-G\end{pmatrix}, & 
    P_1^{-1} = \left(-\dfrac{1}{2}\right)\begin{pmatrix}G^{-1}B-I&-G^{-1}\\-G^{-1}B-I&G^{-1}\end{pmatrix},
  \end{align*}
  where the formula for $P_1^{-1}$ can be verified by direct computation. Then a calculation shows that
  \begin{align}\label{eq:dfdu-similar}
    P_1^{-1}\frac{\partial F}{\partial \hat{U}}P_1 &= -\frac{1}{2} \left( \begin{array}{cc} -2 G - B - A & A - B \\ A - B & 2 G - B - A \end{array}\right),
  \end{align}
  which is symmetric. Thus $\frac{\partial F}{\partial \hat{U}}$ is similar to a diagonalizable matrix with real eigenvalues, and so is itself real diagonalizable.
%
\end{proof}
\begin{remark}
  In the deterministic case, i.e, all the PCE coefficients are zero except possibly the very first coefficient and the matrix in \eqref{eq:dfdu-similar} reduces to the eigenmatrix that symmetrizes the deterministic Jacobian matrix and a diagonal matrix.
\end{remark}
For the deterministic SWE \eqref{eq:swed1}, the velocity $u$ is bounded between the smallest and the largest eigenvalues of the Jacobian of the deterministic SWE. For the SG formulation \eqref{eq:swesg4}, we have an analogous relation. 
\begin{prop}\label{lem:spectralr}
    The eigenvalues of the matrix $\mathcal{P}(\hat{u})$ are bounded between the smallest and the largest eigenvalues of the Jacobian matrix $J(\hat{U})$, i.e.,
    \begin{equation}\label{ju-and-pu}
        \lambda_{\max}(J(\hat{U}))\ge\lambda_{\max}\left(\mathcal{P}(\hat{u})\right)\ge\lambda_{\min}\left(\mathcal{P}(\hat{u})\right)\ge\lambda_{\min}(J(\hat{U})).
    \end{equation}
\end{prop}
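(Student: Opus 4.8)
The plan is to leverage the explicit symmetrization already established in the proof of \cref{thm:hyperbolicity}. Recall that there $J(\hat U) = \partial F/\partial \hat U$ is shown to be similar, via $P_1$, to the symmetric matrix appearing on the right-hand side of \eqref{eq:dfdu-similar}; call this symmetric matrix $\tilde J$. Since similar matrices share the same characteristic polynomial, $J(\hat U)$ and $\tilde J$ have identical eigenvalues, and because $\tilde J$ is symmetric these are all real, so $\lambda_{\min}(J(\hat U)) = \lambda_{\min}(\tilde J)$ and $\lambda_{\max}(J(\hat U)) = \lambda_{\max}(\tilde J)$. Moreover $\mathcal P(\hat u) = B$ is itself symmetric, being the real linear combination $\sum_k \hat u_k \mathcal M_k$ of the symmetric matrices $\mathcal M_k$. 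Thus the whole proposition reduces to comparing the eigenvalues of two symmetric matrices, which I would do through the Rayleigh-quotient (Courant--Fischer) characterization $\lambda_{\max}(M) = \max_{\|v\|=1} v^T M v$ and $\lambda_{\min}(M) = \min_{\|v\|=1} v^T M v$.

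The key step is to embed the Rayleigh quotients of $\mathcal P(\hat u)$ into those of $\tilde J$. Given any unit vector $w \in \R^K$, I form the doubled unit vector $v = \tfrac{1}{\sqrt 2}\begin{pmatrix} w \\ w\end{pmatrix} \in \R^{2K}$ and evaluate $v^T \tilde J v$ block-by-block using the explicit entries in \eqref{eq:dfdu-similar}. Writing $\tilde J$ in terms of $G = \sqrt{g\mathcal P(\hat h)}$, $A = gG^{-1}\mathcal P(\hat q)G^{-1}$, and $B = \mathcal P(\hat u)$, the diagonal blocks contribute $\pm G + \tfrac{A+B}{2}$ and the two off-diagonal blocks each contribute $\tfrac{B-A}{2}$. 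When these are applied to $(w,w)^T$, the $A$-contributions cancel between the diagonal and off-diagonal blocks and the two sign-opposed $\pm G$-contributions cancel against each other, leaving exactly $v^T \tilde J v = w^T \mathcal P(\hat u)\, w$. Hence the set of Rayleigh quotients of $\mathcal P(\hat u)$ over unit $w$ is a subset of the set of Rayleigh quotients of $\tilde J$ over unit $v$.

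From this containment the two-sided bound follows immediately: restricting the maximization defining $\lambda_{\max}(\tilde J)$ to the subclass of vectors of the form $v = \tfrac{1}{\sqrt 2}(w,w)^T$ can only lower the maximum, so $\lambda_{\max}(\tilde J) \ge \max_{\|w\|=1} w^T \mathcal P(\hat u) w = \lambda_{\max}(\mathcal P(\hat u))$, and symmetrically $\lambda_{\min}(\tilde J) \le \lambda_{\min}(\mathcal P(\hat u))$. Combined with the eigenvalue identities from the similarity transformation and the trivial inequality $\lambda_{\max}(\mathcal P(\hat u)) \ge \lambda_{\min}(\mathcal P(\hat u))$, this yields the desired chain \eqref{ju-and-pu}. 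I expect the only real work to be the block computation in the middle paragraph: one must verify carefully that both the $A$ terms and the sign-opposed $G$ terms drop out when the doubled vector is inserted. This cancellation is precisely what makes the bound hold, and is the crux of the argument; everything else is a standard variational comparison of symmetric matrices.
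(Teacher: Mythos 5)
Your proposal is correct and follows essentially the same route as the paper's proof: the same similarity transformation to the symmetric matrix in \eqref{eq:dfdu-similar}, the same doubled unit vector $\tfrac{1}{\sqrt{2}}(w^T,w^T)^T$ yielding $v^T \tilde J v = w^T\mathcal{P}(\hat u)w$, and the same Rayleigh-quotient restriction argument. The block cancellation you flag as the crux is exactly the identity the paper relies on.
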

\begin{proof}
  By the proof of \cref{thm:hyperbolicity}, the matrix $J(\hat{U})$ is similar to the symmetric matrix $D \coloneqq P_1^{-1}\frac{\partial F}{\partial \hat{U}}P_1$ defined in \eqref{eq:dfdu-similar}.
%
For an arbitrary unit vector $\hat{y} = \left(\hat{y}_1,\hat{y}_2,\cdots,\hat{y}_K\right)^{\mathrm{T}}\in\mathbb{R}^{K}$, then $\hat{z} \coloneqq \frac{1}{\sqrt{2}}[ \hat{y}^T, \hat{y}^T]^T \in \R^{2 K}$ is also a unit vector. Then,
\begin{equation}
\hat{z}^{\mathrm{T}}D\hat{z} = \hat{y}^{\mathrm{T}}\mathcal{P}(\hat{u})\hat{y}.
\end{equation}
  From the above relation, and using properties of the Rayleigh quotient for $\mathcal{P}(\hat{u})$, 
\begin{equation*}
    \lambda_{\max}(\mathcal{P}(\hat{u}))\ge\hat{z}^{\mathrm{T}}D\hat{z}\ge\lambda_{\min}(\mathcal{P}(\hat{u})),
\end{equation*}
where equalities can be achieved by proper selections of $\hat{y}$. Using similar Rayleigh quotient properties for $D$ and noting that $\hat{z}$ ranges over a subset of $\R^{2 K}$, then 
\begin{equation} \lambda_{\max}(D)\ge\lambda_{\max}\left(\mathcal{P}(\hat{u})\right)\ge\lambda_{\min}\left(\mathcal{P}(\hat{u})\right)\ge\lambda_{\min}(D)
\end{equation}
The inequalities \eqref{ju-and-pu} follow since $D$ is similar to $J(\hat{U})$. 
\end{proof}

In the deterministic SWE, positivity of the water height $h$ ensures hyperbolicity of the PDE system. \cref{thm:hyperbolicity} shows that the stochastic variant of the positivity condition is that $\mathcal{P}(\hat{h})$ is positive definite. Much of the rest of this paper is devoted to deriving numerical procedures to guarantee this condition.

\subsection{Positive definiteness of $\mathcal{P}(\hat{h})$}
In this subsection, we present a computationally convenient sufficient condition that guarantees $\mathcal{P}(\hat{h})>0$, and hence guarantees hyperbolicity. 
\begin{theorem}\label{thm:h-positivity}
 Given $\Lambda$, let nodes $\xi_m$ and weights $\tau_m$ satisfying $\{(\xi_m, \tau_m)\}_{m=1}^M \subset \R^d \times (0, \infty)$ represent any $M$-point positive quadrature rule that is exact on $P_{\Lambda}^3$, i.e., 
  \begin{align}\label{eq:P3-exactness}
    \int_{\R^d} p(\xi) \rho(\xi) d \xi &= \sum_{m=1}^M p(\xi_m) \tau_m, & p &\in P_\Lambda^3.
  \end{align}
  If 
  \begin{align}\label{eq:h-positivity}
    h_\Lambda(x,t,\xi_m) > 0 \;\; \forall\; m = 1, \ldots, M,
  \end{align}
  then the SGSWE system \eqref{eq:swesg5} is hyperbolic.
\end{theorem}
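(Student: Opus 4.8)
The plan is to reduce the theorem to the positive-definiteness hypothesis of \cref{thm:hyperbolicity}: once I show that the quadrature conditions \eqref{eq:h-positivity} force $\mathcal{P}(\hat{h})$ to be strictly positive definite, hyperbolicity of \eqref{eq:swesg5} follows immediately. So the entire task is to verify $\mathcal{P}(\hat{h}) > 0$. First I would fix an arbitrary nonzero $\hat{y} \in \R^K$, let $y(\xi) \coloneqq \sum_{k=1}^K \hat{y}_k \phi_k(\xi) \in P_\Lambda$ be the corresponding polynomial, and rewrite the associated quadratic form as an integral. Applying \cref{lemma:triple-product} with $a = c = y$ and $b = h_\Lambda$ gives
\begin{equation*}
  \hat{y}^T \mathcal{P}(\hat{h})\, \hat{y} = \langle y,\, h_\Lambda\, y \rangle_{\rho} = \int_{\R^d} h_\Lambda(\xi)\, y(\xi)^2\, \rho(\xi)\, d\xi,
\end{equation*}
which transfers the problem from linear algebra to the positivity of an integral.

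Next I would exploit quadrature exactness. Since $h_\Lambda, y \in P_\Lambda$, the integrand $h_\Lambda\, y^2$ is a threefold product of $P_\Lambda$ elements and hence lies in $P_\Lambda^3$, so the exactness assumption \eqref{eq:P3-exactness} replaces the integral by the finite sum $\sum_{m=1}^M h_\Lambda(\xi_m)\, y(\xi_m)^2\, \tau_m$. Every summand is nonnegative: the weights satisfy $\tau_m > 0$, the factor $y(\xi_m)^2 \ge 0$, and $h_\Lambda(\xi_m) > 0$ by \eqref{eq:h-positivity}. This already delivers positive semidefiniteness, $\hat{y}^T \mathcal{P}(\hat{h})\, \hat{y} \ge 0$.

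The one genuinely delicate step, and the place I expect the main obstacle, is upgrading this to \emph{strict} positivity, i.e., ruling out the degenerate case in which the sum vanishes because $y$ happens to vanish at every node $\xi_m$ while $\hat{y} \ne 0$. To close this gap I would invoke exactness a second time, now on $y^2 \in P_\Lambda^2 \subseteq P_\Lambda^3$ (the inclusion using $0 \in \Lambda$), and combine it with orthonormality of $\{\phi_k\}$:
\begin{equation*}
  \sum_{k=1}^K \hat{y}_k^2 = \langle y, y \rangle_{\rho} = \int_{\R^d} y(\xi)^2 \rho(\xi)\, d\xi = \sum_{m=1}^M y(\xi_m)^2\, \tau_m.
\end{equation*}
If $y$ vanished at all nodes the right-hand side would be zero, forcing $\hat{y} = 0$ and contradicting $\hat{y} \ne 0$. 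Hence for every nonzero $\hat{y}$ at least one node contributes a strictly positive term, so $\hat{y}^T \mathcal{P}(\hat{h})\, \hat{y} > 0$ and $\mathcal{P}(\hat{h})$ is strictly positive definite. An appeal to \cref{thm:hyperbolicity} then finishes the argument.
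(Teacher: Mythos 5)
Your proposal is correct and follows essentially the same route as the paper: reduce to showing $\mathcal{P}(\hat{h})>0$ via \cref{thm:hyperbolicity}, express the quadratic form as $\langle h_\Lambda, y^2\rangle_\rho$ using \cref{lemma:triple-product}, apply $P_\Lambda^3$-exactness to get a sum with positive weights, and rule out $y$ vanishing at all nodes by applying exactness to $y^2\in P_\Lambda^2\subseteq P_\Lambda^3$ together with orthonormality. The paper's proof is identical in every essential step, merely presenting the non-vanishing argument first.
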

\begin{proof}
  We will show that \eqref{eq:h-positivity} implies $\mathcal{P}(\hat{h}) > 0$, which in turn ensures hyperbolicity from \cref{thm:hyperbolicity}. Let $\hat{z} = \left(\hat{z}_k\right)_{k=1}^K$ be any nontrivial vector in $\R^K$, and define its associated $P_\Lambda$ polynomial $z(\xi) \coloneqq \sum_{k=1}^K \hat{z}_j \phi_k(\xi) \neq 0$.
  Then $z(\xi)$ cannot vanish at all quadrature points simultaneously since if it did we obtain the contradiction,
  \begin{equation*}
    0 \neq \| \hat{z}\|^2 = \left\langle z, z \right\rangle_\rho \stackrel{\eqref{eq:P3-exactness}}{=} \sum_{j=1}^M z^2(\xi_j) \tau_j = 0,
  \end{equation*}
  where we have used the fact that $P^2_\Lambda \subseteq P^3_\Lambda$ to utilize \eqref{eq:P3-exactness}.
  Then since the quadrature rule is positive and \eqref{eq:h-positivity} holds, we have
  \begin{align*}
    0 < \sum_{j=1}^M h_\Lambda(x,t,\xi_j) z^2(\xi_j) \tau_j \stackrel{\eqref{eq:P3-exactness}}{=} \left\langle h_\Lambda(x,t,\xi), z^2(\xi) \right\rangle \stackrel{\textrm{Lemma }\eqref{lemma:triple-product}}{=} \hat{z}^T \mathcal{P}(\hat{h}) \hat{z},
  \end{align*}
  establishing that $\mathcal{P}(\hat{h})$ is positive definite.
\end{proof}
Thus, by guaranteeing positivity of $h_\Lambda$ at a finite number of points, we can ensure hyperbolicity of the SGSWE system. For arbitrary stochastic dimension $d$ and polynomial space $P_\Lambda$, there is a worst-case upper bound on the size of this finite set.
\begin{cor}\label{cor:tchakaloff}
  There is some 
  $M \leq \dim P^3_\Lambda \leq \frac{K(K+1)(K+2)}{6}$ such that the discrete pointwise positivity condition \eqref{eq:h-positivity} guarantees hyperbolicity of \eqref{eq:swesg5}.
\end{cor}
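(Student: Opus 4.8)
The plan is to combine Theorem~\ref{thm:h-positivity} with a classical quadrature-existence result, the key point being that Theorem~\ref{thm:h-positivity} is stated for \emph{any} positive quadrature rule exact on $P_\Lambda^3$, so it suffices to exhibit one such rule whose number of nodes $M$ is controlled by $\dim P_\Lambda^3$. The numerical bound $\frac{K(K+1)(K+2)}{6}$ is already available from \eqref{eq:PL} with $r=3$, since $\dim P_\Lambda^3 \le \binom{K+2}{3} = \frac{K(K+1)(K+2)}{6}$. Thus the entire content of the corollary reduces to the claim: there exists a positive quadrature rule, exact on $P_\Lambda^3$, using at most $\dim P_\Lambda^3$ nodes.

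First I would invoke Tchakaloff's theorem (hence the label \texttt{cor:tchakaloff}), which guarantees that for a measure $\rho$ on $\R^d$ with finite moments up to the relevant order, and for any finite-dimensional space $V$ of integrable functions, there is a quadrature rule with \emph{positive} weights and with at most $\dim V$ nodes that integrates every element of $V$ exactly. Applying this with $V = P_\Lambda^3$ produces nodes $\{\xi_m\}_{m=1}^M$ and positive weights $\{\tau_m\}_{m=1}^M$ with $M \le \dim P_\Lambda^3$ satisfying exactly the exactness hypothesis \eqref{eq:P3-exactness}. I would note that the finite-moment requirement is already assumed in the paper (the construction of the orthonormal basis $\{\phi_k\}$ presumes $\rho$ has finite polynomial moments of all orders), so the hypotheses of Tchakaloff's theorem are met.

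With such a rule in hand, the corollary follows immediately: the rule satisfies all hypotheses of Theorem~\ref{thm:h-positivity}, so imposing the pointwise positivity condition \eqref{eq:h-positivity} at these $M$ nodes guarantees $\mathcal{P}(\hat{h}) > 0$ and hence hyperbolicity of \eqref{eq:swesg5}. The chain of inequalities $M \le \dim P_\Lambda^3 \le \frac{K(K+1)(K+2)}{6}$ then gives the stated worst-case bound. I would present this as a short three-line argument: cite Tchakaloff, produce the rule, apply the theorem.

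The only subtlety — and the place I would be most careful — is ensuring the correct hypotheses and statement of Tchakaloff's theorem are cited, since the classical version is often stated for compactly supported measures while $\rho$ here lives on all of $\R^d$. The resolution is to use a modern generalized version valid for any measure with finite moments on the relevant polynomial space (such formulations are standard in the optimal-design and cubature literature); the existence of a positive representing measure supported on $\le \dim V$ points is guaranteed because the moment functional $p \mapsto \int p\,\rho$ lies in the convex cone generated by point evaluations, and Carath\'eodory's theorem on conic combinations bounds the number of extreme points needed by the dimension of the ambient space $\dim V$. This is genuinely the main (and essentially only) obstacle; everything else is bookkeeping with the dimension bound already derived in \eqref{eq:PL}.
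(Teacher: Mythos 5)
Your proposal is correct and follows essentially the same route as the paper: the paper's appendix also combines Tchakaloff's theorem with a Carath\'eodory reduction to obtain a positive rule with at most $\dim P^3_\Lambda$ nodes exact on $P^3_\Lambda$, then invokes \cref{thm:h-positivity} and the dimension bound from \eqref{eq:PL}. The only cosmetic difference is that the paper first applies the classical Tchakaloff statement to a total-degree space $P_{T,\ell^\ast} \supseteq P^3_\Lambda$ and then explicitly runs the Carath\'eodory argument to cut the node count down to $\dim P^3_\Lambda$, whereas you absorb that step into citing a generalized Tchakaloff theorem for arbitrary finite-dimensional spaces --- the underlying argument, including your remark about finite moments on $\R^d$ versus compact support, is the same.
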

We give the proof in \cref{lemma:tchakaloff} in the Appendix.
%
One might consider the somewhat simpler condition of restricting $\hat{h}_1>0$ for hyperbolicity since $\hat{h}_1$ is the expected value of $h_\Lambda$. This condition is actually implied by the condition in \cref{thm:h-positivity}.
\begin{cor}
  If the conditions of \cref{thm:h-positivity} are satisfied, then $\hat{h}_1 > 0$.
\end{cor}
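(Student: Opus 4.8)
The plan is to recognize that $\hat{h}_1$ is nothing but the expected value of $h_\Lambda$, and then to reuse the quadrature exactness already invoked in the proof of \cref{thm:h-positivity}. First I would record that, since $0 \in \Lambda$ and $\phi_1 \equiv 1$, the orthonormality relation \eqref{eq:orthocond} gives
\[
  \hat{h}_1 = \langle h_\Lambda, \phi_1 \rangle_\rho = \int_{\R^d} h_\Lambda(x,t,\xi)\,\rho(\xi)\,d\xi = \mathbb{E}[h_\Lambda],
\]
which is exactly the first identity in \eqref{eq:expvar}. So the statement to be proved is just that the mean water height is positive.

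Next I would observe that $h_\Lambda \in P_\Lambda$, and because $0 \in \Lambda$ forces $P_\Lambda \subseteq P_\Lambda^3$, the integrand is a polynomial on which the quadrature rule of \cref{thm:h-positivity} is exact. Applying the exactness property \eqref{eq:P3-exactness} therefore yields $\hat{h}_1 = \sum_{m=1}^M h_\Lambda(x,t,\xi_m)\,\tau_m$. Every weight $\tau_m$ is strictly positive by hypothesis, and the pointwise positivity assumption \eqref{eq:h-positivity} makes each evaluation $h_\Lambda(x,t,\xi_m)$ strictly positive; hence every summand is positive and the sum is positive, giving $\hat{h}_1 > 0$.

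An alternative route, which sidesteps re-deriving the quadrature identity, is to lean directly on the conclusion $\mathcal{P}(\hat{h}) > 0$ already established inside the proof of \cref{thm:h-positivity}. Testing the positive-definite quadratic form against the first standard basis vector $e_1 \in \R^K$ gives $0 < e_1^T \mathcal{P}(\hat{h}) e_1 = (\mathcal{P}(\hat{h}))_{11}$, and a one-line computation using $(\mathcal{M}_k)_{11} = \langle \phi_k, \phi_1\phi_1 \rangle_\rho = \langle \phi_k, \phi_1 \rangle_\rho = \delta_{k1}$ shows $(\mathcal{P}(\hat{h}))_{11} = \sum_{k=1}^K \hat{h}_k \delta_{k1} = \hat{h}_1$. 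Either way, there is no genuine obstacle: the corollary is essentially immediate once $\hat{h}_1$ is identified with the mean. The only points demanding a moment of care are the containment $h_\Lambda \in P_\Lambda \subseteq P_\Lambda^3$ (so that \eqref{eq:P3-exactness} may be applied) in the first approach, and the reduction $(\mathcal{M}_k)_{11} = \delta_{k1}$ in the second, both of which follow directly from $\phi_1 \equiv 1$ together with the normalization \eqref{eq:orthocond}.
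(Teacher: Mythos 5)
Your primary argument is correct and is essentially the paper's own proof: identify $\hat{h}_1$ with $\mathbb{E}[h_\Lambda]$, use exactness of the positive quadrature rule on $P_\Lambda \subseteq P_\Lambda^3$, and conclude positivity from the pointwise condition \eqref{eq:h-positivity}. Your alternative via $e_1^T \mathcal{P}(\hat{h}) e_1 = \hat{h}_1 > 0$ is also valid, but the paper takes the first, more direct route.
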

\begin{proof}
  Since $\tau_j > 0$ and $h_{\Lambda} > 0$ at the quadrature points, then
  \begin{align*}
    \hat{h}_1 = \int_{\R^d} h_\Lambda(x,t,\zeta) \rho(\zeta) d\zeta = \sum_{j=1}^M h_\Lambda(x,t,\xi_j) \tau_j > 0,
  \end{align*}  
\end{proof}

A computable condition ensuring hyperbolicity therefore requires a positive quadrature rule that is exact on $P_{\Lambda}^3$. For general densities $\rho$ over $\R^d$, computing such a quadrature rule is a very difficult task. But this is possible in specialized cases.

For example, if $d = 1$ and $\Lambda = \{0, 1, \ldots, K-1\}$, then an optimal choice of positive quadrature is the $\rho$-Gaussian quadrature. Since $P_\Lambda^3 = \mathrm{span}\{1, \zeta, \ldots, \zeta^{3K-3}\}$, then choosing the positive $M$-point Gaussian quadrature,
\begin{align*}
  \{\xi_m\}_{m=1}^M &= \phi_{M+1}^{-1}(0), & \tau_m &= \frac{1}{\sum_{j=1}^M \phi_j^2(\xi_m)},
\end{align*}
with $M \geq \left\lceil \frac{3 K}{2} \right\rceil - 1$ satisfies the conditions of \cref{thm:h-positivity} (and does so with substantially fewer points than the $\sim K^3/6$ worst-case bound from \cref{cor:tchakaloff}). Gaussian quadrature rules have real-valued nodes and positive weights \cite{szego_orthogonal_1975}.

In spaces with $d > 1$, if $\rho$ is tensorial, then tensorizing Gauss quadrature rules achieves similar results. I.e., assume
\begin{align*}
  \rho(\xi) &= \prod_{J=1}^d \rho_J(\xi_J), & \xi &\in \R^d,
\end{align*}
We can always enclose $P_\Lambda$ within a tensor-product polynomial space:
\begin{align*}
  P_{\Lambda}^3 \subseteq P_{3 k,\infty} &\coloneqq \left\{ \lambda \in \N_0^d \;\big|\; \lambda_J \leq 3 \kappa_J \textrm{ for } J = 1, \ldots, d \right\}, & \kappa_J &\coloneqq \max_{\nu \in \Lambda} \nu_J.
\end{align*}
For a fixed $J \in \{1, \ldots, d\}$, let $\{(\xi^{(J)}_{m,M_J}, \tau^{(J)}_{m,M_J})\}_{m=1}^{M_J}$ denote the $M_J \coloneqq (\left\lceil \frac{3 \kappa_J}{2} \right\rceil - 1)$-point $\rho_J$-Gaussian quadrature rule on $\R$. Then the tensorization of these $d$ univariate quadrature rules results in an $M \coloneqq \left(\prod_{J=1}^d M_J\right)$-point positive quadrature rule that is exact on $P_{3 k, \infty}$, hence on $P_\Lambda^3$, and thus satisfies the conditions of \cref{thm:h-positivity}.



\section{Numerical Scheme for Stochastic Shallow Water Equations}\label{sec:scheme}
In this section, we derive a well-balanced central-upwind scheme that preserves the hyperbolicity of the SG formulation \eqref{eq:swesg5} at every time step. 

\subsection{Central-Upwind Scheme for the SG System}
We first introduce the central-upwind scheme for the SG system \eqref{eq:swesg5}. \cref{append:cuscheme} provides a brief summary of the second-order central-upwind schemes for balance laws.  With $\{\mathcal{C}_i\}_{i=1}^N$ a partition of a bounded closed interval, let $x_{i \pm \frac{1}{2}}$ denote the partition boundaries, and define the cell average of the vector $\hat{U}$ over the $i$th cell $\mathcal{C}_i \eqqcolon \left[ x_{i-\frac{1}{2}}, x_{i+\frac{1}{2}}\right]$ as,
$$
\overline{\mathbf{U}}_i(t) \coloneqq \begin{pmatrix}\overline{\mathbf{h}}_i(t)\\\overline{\mathbf{q}}_i(t)\end{pmatrix}\coloneqq\frac{1}{\Delta x}\int_{\mathcal{C}_i}\begin{pmatrix}\hat{h}(x,t)\\\hat{q}(x,t)\end{pmatrix}dx \in \R^{2 K}.
$$ 
  We have introduced notation for common quantities in finite volume-type schemes. While $\hat{U}_k$ is the $k$th component of the vector $\hat{U}$, the bold letter $\mathbf{U}$ with subscripts and superscripts is used here to introduce the cell averages and pointwise reconstructions, respectively, of the vector $\hat{U}(x,t)$. I.e., $\mathbf{U}_{i+\frac{1}{2}}^-$ is the approximated value of $\hat{U}$ at the left-hand side of spatial location $x = x_{i+\frac{1}{2}}$, which is reconstructed from the cell averages $\overline{\mathbf{U}}_i$.  
  A similar reasoning applies to $(\mathbf{h}, \hat{h}, \hat{h}_k)$ and $(\mathbf{q}, \hat{q}, \hat{q}_k)$. 
To minimize clutter, we will notationally suppress $t$ dependence from here onward. The possible discontinuities of the system \eqref{eq:swesg5} at the cell interface $x = x_{i+\frac{1}{2}}$, where $\mathcal{C}_i = \left[ x_{i-\frac{1}{2}}, x_{i+\frac{1}{2}}\right]$, propagates with left- and right-sided local speeds that can be estimated by,
\begin{equation}\label{eq:pspeed}
    \begin{aligned}
        &a^{-}_{i+\frac{1}{2}} = \min\left\{\lambda_1\left(J(\mathbf{U}_{i+\frac{1}{2}}^{-})\right),\lambda_1\left(J(\mathbf{U}_{i+\frac{1}{2}}^{+})\right),0\right\},\\
        &a^{+}_{i+\frac{1}{2}} = \max\left\{\lambda_{2K}\left(J(\mathbf{U}_{i+\frac{1}{2}}^{-})\right),\lambda_{2K}\left(J(\mathbf{U}_{i+\frac{1}{2}}^{+})\right),0\right\},\\
    \end{aligned}
\end{equation}
where $\lambda_1\le\lambda_2\le\cdots\le\lambda_{2K}$ are the eigenvalues of the $J(\cdot)$ in \eqref{eq:jacobian1}, and 
$\mathbf{U}_{i+\frac{1}{2}}^{-}$ and $\mathbf{U}_{i+\frac{1}{2}}^{+}$ are the left- and right-sided pointwise reconstructions in the $i$th cell. The semi-discrete form of the central-upwind scheme for the SG system \eqref{eq:swesg5} reads as,
\begin{align}\label{eq:semidiscretewsg}
  \dfrac{d}{dt}\overline{\mathbf{U}}_i &= -\dfrac{\mathcal{F}_{i+\frac{1}{2}}-\mathcal{F}_{i-\frac{1}{2}}}{\Delta x}+\overline{\mathbf{S}}_i, &
  \overline{\mathbf{S}}_i&\approx\frac{1}{\Delta x}\int_{\mathcal{C}_i}S(\mathbf{U},\mathbf{B})dx
\end{align}
 with $\overline{\mathbf{S}}_i$ 
 a well-balanced discretization of the source term, which we discuss below. With $F$ the flux term in \eqref{eq:sweflt}, the numerical flux $\mathcal{F}$ is given by
\begin{equation}\label{eq:fluxcusg}
    \mathcal{F}_{i+\frac{1}{2}} \coloneqq \dfrac{a^{+}_{i+\frac{1}{2}}F(\mathbf{U}_{i+\frac{1}{2}}^{-})-a^{-}_{i+\frac{1}{2}}F(\mathbf{U}_{i+\frac{1}{2}}^{+})}{a^{+}_{i+\frac{1}{2}}-a^{-}_{i+\frac{1}{2}}}+\dfrac{a^{+}_{i+\frac{1}{2}}a^{-}_{i+\frac{1}{2}}}{a^{+}_{i+\frac{1}{2}}-a^{-}_{i+\frac{1}{2}}}\left[\mathbf{U}_{i+\frac{1}{2}}^{+}-\mathbf{U}_{i+\frac{1}{2}}^{-}\right].
\end{equation}

\subsection{Well-Balanced Property}\label{sect:wb}
In applications of the deterministic SWE, simulations should accurately capture the so-called ``lake-at-rest'' steady state solution, or small perturbations of the lake-at-rest steady state. A \textit{well-balanced} numerical scheme for the SWE captures the lake-at-rest solution exactly at discrete level. An analogous lake-at-rest state for the stochastic shallow water equations \eqref{eq:swesg4} is
\begin{equation}\label{eq:lake-at-rest}
  q_\Lambda(x,t,\xi) \equiv 0,\quad h_\Lambda + \mathcal{G}_\Lambda[B](x,t,\xi) \equiv C(\xi),
\end{equation}
where $C(\xi)$ depends only on $\xi$. This solution corresponds to still water with a flat stochastic water surface. Equation \eqref{eq:lake-at-rest} can be rewritten in the vector form,
\begin{equation}
\hat{q} \equiv \mathbf{0},\quad\hat{h}+\hat{B} \equiv \hat{C}.
\end{equation}
In order to derive a well-balanced central upwind scheme for the SGSWE, we first replace the original bottom function $\hat{B}$ by its continuous linear interpolant. At every time step, we compute the PCE vector for the cell averages of the water surface by $\overline{\mathbf{w}}_i\coloneqq \overline{\mathbf{h}}_i+\overline{\mathbf{B}}_i$ and the pointwise reconstructions of the water surface by $\mathbf{w}^{\pm}_{i+\frac{1}{2}}$ using a generalized minmod limiter (see \cref{append:cuscheme}). The pointwise reconstructions of the water height are then computed by
\begin{align}\label{eq:hreconstruction}
    \mathbf{h}^{\pm}_{i+\frac{1}{2}}\coloneqq \mathbf{w}^{\pm}_{i+\frac{1}{2}}-\mathbf{B}_{i+\frac{1}{2}},
\end{align}
where $\mathbf{B}_{i+\frac{1}{2}}$ is the PCE vector for $\mathcal{G}_\Lambda\left[ B(x_{i+\frac{1}{2}}, t, \xi)\right]$. The numerical fluxes $\{\mathcal{F}_{i+\frac{1}{2}}\}_{i=1}^{N}$ are subsequently computed using the reconstructed PCE of the water height defined in \eqref{eq:hreconstruction}. 
After that, the well-balanced property of the scheme is ensured by a special choice of the source term $\overline{\mathbf{S}}_i$.
\begin{lem}\label{lem:wbproperty}
  With $\mathbf{B}_{i\pm\frac{1}{2}}$ the PCE vectors for $\mathcal{G}_\Lambda\left[ B(x_{i\pm\frac{1}{2}}, t, \xi)\right]$,\\ if we choose
  \begin{equation}\label{eq:balancedterm}
      \overline{\mathbf{S}}_i\coloneqq \begin{pmatrix}\mathbf{0}\\-\frac{1}{\Delta x}g\mathcal{P}(\overline{\mathbf{h}}_i)\left(\mathbf{B}_{i+\frac{1}{2}}-\mathbf{B}_{i-\frac{1}{2}}\right)\end{pmatrix},
  \end{equation}
  then the central-upwind scheme \eqref{eq:semidiscretewsg} satisfies the well-balanced property.
\end{lem}
\begin{proof}
 We have $\overline{\mathbf{B}}_i = (\mathbf{B}_{i+\frac{1}{2}}+\mathbf{B}_{i-\frac{1}{2}})/2$, and the cell average PCE vector of the water surface $\overline{\mathbf{w}}_{i}\coloneqq \overline{\mathbf{h}}_{i}+\overline{\mathbf{B}}_{i}$. Let the pointwise reconstructions for water surface be $\mathbf{w}^{\pm}_{i+\frac{1}{2}}$. Assume that at time $t$, the stochastic water surface is flat and the water is still, i.e.,  $\overline{\mathbf{w}}_{i}\equiv\mathbf{w^*}$ is a constant vector for all $i$, and $\overline{\mathbf{q}}_i\equiv \mathbf{0}$. Then a second-order piecewise linear reconstruction procedure produces $\mathbf{w}^{\pm}_{i+\frac{1}{2}}\equiv \mathbf{w^*}$ and $\mathbf{q}^{\pm}_{i+\frac{1}{2}}\equiv \mathbf{0}$.
 Hence,  the numerical flux defined in \eqref{eq:fluxcusg} becomes,
\begin{equation}\label{eq:sgflux}
\mathcal{F}_{i+\frac{1}{2}} = \begin{pmatrix}\mathbf{0}\\\frac{g}{2}\mathcal{P}(\mathbf{w^*}-\mathbf{B}_{i+\frac{1}{2}})(\mathbf{w^*}-\mathbf{B}_{i+\frac{1}{2}})\end{pmatrix}=:\begin{pmatrix}\mathcal{F}^{\hat{h}}_{i+\frac{1}{2}}\\\mathcal{F}^{\hat{q}}_{i+\frac{1}{2}}\end{pmatrix}.
\end{equation}
  Then with $\overline{\mathbf{S}}_i = \left( \overline{\mathbf{S}}_{i,1}^T, \overline{\mathbf{S}}_{i,2}^T \right)^T$, the corresponding semidiscrete form is 
  {\small 
\begin{equation}\label{eq:wbp1}
    \begin{aligned}
        \dfrac{d }{dt}\overline{\mathbf{h}}_i &= \overline{\mathbf{S}}_{i,1} \\ 
        \dfrac{d }{dt}\overline{\mathbf{q}}_i &= 
        -\dfrac{1}{\Delta x}\frac{g}{2}\left[\mathcal{P}(\mathbf{w^*}-\mathbf{B}_{i+\frac{1}{2}})(\mathbf{w^*}-\mathbf{B}_{i+\frac{1}{2}})-\mathcal{P}(\mathbf{w^*}-\mathbf{B}_{i-\frac{1}{2}})(\mathbf{w^*}-\mathbf{B}_{i-\frac{1}{2}})\right]+\overline{\mathbf{S}}_{i,2}
    \end{aligned}.
\end{equation}
  }
  To balance these equations, we choose $\overline{\mathbf{S}}_{i,1}$ and $\overline{\mathbf{S}}_{i,2}$ so that the right-hand side vanishes. Clearly we need $\overline{\mathbf{S}}_{i,1} \equiv \mathbf{0}$. 
  To simplify the computation for $\overline{\mathbf{S}}_{i,2}$, let $\Delta \mathbf{B}_i = \mathbf{B}_{i+\frac{1}{2}}-\mathbf{B}_{i-\frac{1}{2}}$, then $\overline{\mathbf{B}}_i = \mathbf{B}_{i+\frac{1}{2}} - \frac{1}{2}\Delta \mathbf{B}_i = \mathbf{B}_{i-\frac{1}{2}} + \frac{1}{2}\Delta \mathbf{B}_i$. By linearity of the operator $\mathcal{P}$ and the property \eqref{eq:pmatrixproperty}, 
\begin{equation}\label{eq:wbp3}
    \begin{aligned}
      \overline{\mathbf{S}}_{i,2} &= \dfrac{1}{\Delta x}\frac{g}{2}\left[\mathcal{P}(\mathbf{w^*}-\mathbf{B}_{i+\frac{1}{2}})(\mathbf{w^*}-\mathbf{B}_{i+\frac{1}{2}})-\mathcal{P}(\mathbf{w^*}-\mathbf{B}_{i-\frac{1}{2}})(\mathbf{w^*}-\mathbf{B}_{i-\frac{1}{2}})\right]\\
        & =\dfrac{1}{\Delta x}\frac{g}{2}\left[\mathcal{P}\left(\mathbf{w^*}-\overline{\mathbf{B}}_{i}-\frac{1}{2}\Delta \mathbf{B}_i\right)\left(\mathbf{w^*}-\overline{\mathbf{B}}_{i}-\frac{1}{2}\Delta \mathbf{B}_i\right)\right.\\
        &\;\;\;\left.-\mathcal{P}\left(\mathbf{w^*}-\overline{\mathbf{B}}_{i}+\frac{1}{2}\Delta \mathbf{B}_i\right)\left(\mathbf{w^*}-\overline{\mathbf{B}}_{i}+\frac{1}{2}\Delta \mathbf{B}_i\right)\right]\\
        & = \dfrac{1}{\Delta x}\frac{g}{2}\left[\mathcal{P}(\mathbf{w^*}-\overline{\mathbf{B}}_i)\left(-\Delta \mathbf{B}_i\right)-\mathcal{P}\left(\frac{\Delta \mathbf{B}_i}{2}\right)\left(2\mathbf{w^*}-2\overline{\mathbf{B}}_i\right)\right]\\
        & = -g\mathcal{P}(\mathbf{w^*}-\overline{\mathbf{B}}_i)\left(\dfrac{\mathbf{B}_{i+\frac{1}{2}}-\mathbf{B}_{i-\frac{1}{2}}}{\Delta x}\right) = -g\mathcal{P}(\overline{\mathbf{h}}_i)\left(\dfrac{\mathbf{B}_{i+\frac{1}{2}}-\mathbf{B}_{i-\frac{1}{2}}}{\Delta x}\right).
    \end{aligned}
\end{equation}

\end{proof}
 In the meantime, \eqref{eq:balancedterm} reduces to the deterministic well-balanced quadrature approximation when there is no uncertainty. The deterministic formula is obtained by applying the midpoint quadrature rule to the cell averages \eqref{eq:semidiscretewsg} with the derivative term $\mathbf{B}_x(x_i)$ approximated by the finite difference $\left(\mathbf{B}_{i+\frac{1}{2}}-\mathbf{B}_{i-\frac{1}{2}}\right)/\Delta x$ \cite{kurganov2007second}.
\subsection{Hyperbolicity-Preserving CFL-type conditions}
To determine \\hyperbolicity-preserving CFL-type conditions, we focus on the first $K$ equations in \eqref{eq:semidiscretewsg} which prescribe evolution of $\overline{\mathbf{h}}_i$, 
\begin{equation}\label{eq:semidiscrete}
    \dfrac{d}{dt}\overline{\mathbf{h}}_i = -\dfrac{1}{\Delta x}\left[\mathcal{F}^{\hat{h}}_{i+\frac{1}{2}}(t)-\mathcal{F}^{\hat{h}}_{i-\frac{1}{2}}(t)\right],
\end{equation}
where 
\begin{equation}\label{eq:fluxcu}
    \mathcal{F}^{\hat{h}}_{i+\frac{1}{2}} = \dfrac{a^{+}_{i+\frac{1}{2}}\mathbf{q}_{i+\frac{1}{2}}^{-}-a^{-}_{i+\frac{1}{2}}\mathbf{q}_{i+\frac{1}{2}}^{+}}{a^{+}_{i+\frac{1}{2}}-a^{-}_{i+\frac{1}{2}}}+\dfrac{a^{+}_{i+\frac{1}{2}}a^{-}_{i+\frac{1}{2}}}{a^{+}_{i+\frac{1}{2}}-a^{-}_{i+\frac{1}{2}}}\left[\mathbf{h}_{i+\frac{1}{2}}^{+}-\mathbf{h}_{i+\frac{1}{2}}^{-}\right].
\end{equation}
A fully discrete version of \eqref{eq:semidiscrete} computes the unknowns at fixed values of time, $t^n$, $n \in \N_0$, with $t^n < t^{n+1}$. For example, with $\overline{\mathbf{h}}_i^n$ the numerical approximation to $\overline{\mathbf{h}}_i(t^n)$, and $\Delta t^n \coloneqq t^{n+1} - t^n$, the Forward Euler discretization of \eqref{eq:semidiscrete} reads,
\begin{align}\label{eq:FE}
  \overline{\mathbf{h}}_i^{n+1} &= \overline{\mathbf{h}}_i^n - \lambda_i^n \left[\mathcal{F}^{\hat{h}}_{i+\frac{1}{2}}(t^n)-\mathcal{F}^{\hat{h}}_{i-\frac{1}{2}}(t^n)\right], & \lambda_i^n \coloneqq \frac{\Delta t^n}{\Delta x_i}.
\end{align}
The following CFL condition guarantees hyperbolicity of the system \eqref{eq:FE} at $t = t^{n+1}$ for all cell averages, by enforcing the positivity condition prescribed in \cref{thm:h-positivity}.
\begin{lem}\label{lemma:CFL}
  Let $\left\{\xi_j\right\}_{j=1}^M$ be the nodes of a quadrature rule satisfying the conditions of \cref{thm:h-positivity}. Assume that $\overline{\mathbf{h}}_i^n(\xi_j)\;  >  \;0$ for $1 \leq j \leq M$. If $\Delta t^n$ satisfies
  \begin{align}\label{eq:CFL}
    \Delta t^n &< \Delta t^n_{h} \coloneqq 
    \min_{\substack{1\le j\le M\\i}}\left\{\Delta x_i \left|\dfrac{(\overline{\mathbf{h}}^{n}_{i})^{\mathrm{T}}\boldsymbol{\Phi}(\xi_j)}{\left[\mathcal{F}^{\hat{h}}_{i+\frac{1}{2}}(t_n)-\mathcal{F}^{\hat{h}}_{i-\frac{1}{2}}(t_n)\right]^{\mathrm{T}}\boldsymbol{\Phi}(\xi_j)}\right|\right\},
  \end{align}
  then the flux Jacobian \eqref{eq:jacobian1}, $J\left(\overline{\mathbf{U}}_i^{n+1}\right)$ is diagonalizable with real eigenvalues. 
\end{lem}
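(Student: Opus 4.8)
The plan is to reduce the spectral conclusion about $J(\overline{\mathbf{U}}_i^{n+1})$ to the pointwise positivity hypothesis of \cref{thm:h-positivity}. That theorem, together with \cref{thm:hyperbolicity}, already tells us that $J(\overline{\mathbf{U}}_i^{n+1})$ is diagonalizable with real eigenvalues as soon as the water-height coefficient vector $\overline{\mathbf{h}}_i^{n+1}$ evaluates positively at every quadrature node, i.e. $\overline{\mathbf{h}}_i^{n+1}(\xi_j) = (\overline{\mathbf{h}}_i^{n+1})^{\mathrm{T}} \boldsymbol{\Phi}(\xi_j) > 0$ for $1 \le j \le M$, where $\boldsymbol{\Phi}(\xi_j) \coloneqq (\phi_1(\xi_j),\ldots,\phi_K(\xi_j))^{\mathrm{T}}$. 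So the entire lemma is equivalent to establishing this pointwise positivity for every cell index $i$, and I would state this reduction first.

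Next I would insert the Forward Euler update \eqref{eq:FE} and exploit the linearity of the evaluation map $\hat{v} \mapsto \hat{v}^{\mathrm{T}} \boldsymbol{\Phi}(\xi_j)$. Writing $A_{ij} \coloneqq (\overline{\mathbf{h}}_i^n)^{\mathrm{T}} \boldsymbol{\Phi}(\xi_j)$ and $C_{ij} \coloneqq [\mathcal{F}^{\hat{h}}_{i+\frac{1}{2}}(t^n) - \mathcal{F}^{\hat{h}}_{i-\frac{1}{2}}(t^n)]^{\mathrm{T}} \boldsymbol{\Phi}(\xi_j)$, the projected update at node $\xi_j$ becomes
\begin{equation*}
  \overline{\mathbf{h}}_i^{n+1}(\xi_j) = A_{ij} - \lambda_i^n C_{ij}, \qquad \lambda_i^n = \frac{\Delta t^n}{\Delta x_i}.
\end{equation*}
By hypothesis $A_{ij} > 0$, and $\lambda_i^n > 0$, so positivity is immediate whenever $C_{ij} \le 0$, since then $-\lambda_i^n C_{ij} \ge 0$ and $\overline{\mathbf{h}}_i^{n+1}(\xi_j) \ge A_{ij} > 0$. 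The only case requiring work is $C_{ij} > 0$.

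In that case positivity is equivalent to $\lambda_i^n < A_{ij}/C_{ij}$, i.e. $\Delta t^n < \Delta x_i\, A_{ij}/C_{ij} = \Delta x_i |A_{ij}/C_{ij}|$, the last equality using $A_{ij}>0$ and $C_{ij}>0$. This is exactly the $(i,j)$ term inside the minimum defining $\Delta t^n_{h}$ in \eqref{eq:CFL}, so the hypothesis $\Delta t^n < \Delta t^n_{h}$ forces $\Delta t^n < \Delta x_i |A_{ij}/C_{ij}|$ and the claim follows for that node and cell. I would close by noting the minor technical points that the absolute value is what lets the single bound \eqref{eq:CFL} cover both signs of $C_{ij}$ uniformly (only a one-sided bound is truly needed for positivity, so \eqref{eq:CFL} is a possibly conservative sufficient condition), and that a vanishing $C_{ij}$ should be read as contributing $+\infty$ to the minimum and omitted.

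The step I expect to require the most care is the reduction in the first paragraph: one must confirm that \cref{thm:h-positivity} applies verbatim with $\hat{h}$ replaced by the updated cell-average vector $\overline{\mathbf{h}}_i^{n+1}$ for each fixed $i$, so that pointwise positivity of $\overline{\mathbf{h}}_i^{n+1}$ at the $M$ nodes yields $\mathcal{P}(\overline{\mathbf{h}}_i^{n+1}) > 0$ and hence real diagonalizability of $J(\overline{\mathbf{U}}_i^{n+1})$. Once this projection-and-positivity framework is in place, the sign analysis is entirely routine.
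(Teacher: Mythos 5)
Your proposal is correct and follows essentially the same route as the paper: reduce the spectral claim to pointwise positivity of $\overline{\mathbf{h}}_i^{n+1}$ at the quadrature nodes via \cref{thm:h-positivity}, substitute the Forward Euler update, and observe that the CFL bound \eqref{eq:CFL} forces the projected update to stay positive. Your explicit sign-splitting on $C_{ij}$ and the remark that only a one-sided bound is truly needed are slightly more detailed than the paper's write-up, and in fact anticipate the paper's own remark that \eqref{eq:CFL} can be relaxed by accounting for flux signs.
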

\begin{proof}
  \cref{thm:h-positivity} guarantees the conclusion if $\overline{\mathbf{h}}_i^{n+1}(\xi_j) > 0$, for $1 \leq j \leq M$, so we proceed to show this latter property. For each $j$, the inequality
  \begin{align}\label{ieq:positivity}
     0 < (\overline{\mathbf{h}}^{n+1}_{i})^{\mathrm{T}}\boldsymbol{\Phi}(\xi_j) = (\overline{\mathbf{h}}^{n}_{i})^{\mathrm{T}}\boldsymbol{\Phi}(\xi_j)-
    \lambda_i^n \left[\mathcal{F}^{\hat{h}}_{i+\frac{1}{2}}(t_n)-\mathcal{F}^{\hat{h}}_{i-\frac{1}{2}}(t_n)\right]^{\mathrm{T}}\boldsymbol{\Phi}(\xi_j)
  \end{align}
  holds if we choose 
  \begin{align*}
    \frac{\Delta t^n}{\Delta x_i} &= \lambda_i^n < \min_{1\le j\le M}\left\{\left|\dfrac{(\overline{\mathbf{h}}^{n}_{i})^{\mathrm{T}}\boldsymbol{\Phi}(\xi_j)}{\left[\mathcal{F}^{\hat{h}}_{i+\frac{1}{2}}(t_n)-\mathcal{F}^{\hat{h}}_{i-\frac{1}{2}}(t_n)\right]^{\mathrm{T}}\boldsymbol{\Phi}(\xi_j)}\right|\right\}. 
  \end{align*}
  Multiplying both sides by $\Delta x_i$ and minimizing over $i$ yields the conclusion.

\end{proof}

The condition \eqref{eq:CFL} ensures positivity of the water height, but we also need to adhere to standard wavespeed-based CFL stability conditions. Thus, we will choose
\begin{equation}\label{eq:tstep}
  \Delta t^n = 0.9\min\left\{\Delta t^n_h, \min_i \frac{\Delta x_i}{\max \{a^{+}_{i+\frac{1}{2}}, -a^{-}_{i+\frac{1}{2}}\}}\right\}.
\end{equation}
To extend these conditions to hold higher-order schemes, we use strong stability-preserving Runge-Kutta schemes \cite{gottlieb2001strong} to solve the semidiscrete system \eqref{eq:semidiscretewsg}. The analysis above for the condition \eqref{eq:CFL} still holds for this solver since the ODE solver can be written as a convex combination of several forward Euler steps. However, an adaptive time-step control needs to be adopted to determine the time step \cite{chertock2015well,kurganov2018finite}. The analysis above can also be naturally extended to any other finite volume solvers.
\begin{remark}
  The CFL condition \eqref{eq:CFL} can be relaxed if the signs of the fluxes are taken into account in the inequality \eqref{ieq:positivity}. In implementation, this can be used to reduce the simulation time.
\end{remark}

It is important to note that, the CFL-type condition provided above is limited to the cell averages. For the second-order (or higher-order) central-upwind scheme, additional correction is required for the pointwise reconstructions $\mathbf{U}^{\pm}_{i+\frac{1}{2}}$ to ensure hyperbolicity of \eqref{eq:FE}. Similarly, special correction is needed for the near-dry states, where the matrices $\mathcal{P}(\mathbf{h}^{\pm}_{i+\frac{1}{2}})$ are close to singular, to ensure hyperbolicity.
\subsubsection{Hyperbolicity-Preserving Correction to the Reconstruction}
Assuming $(\overline{\mathbf{h}}^{n}_{i})^{\mathrm{T}}\boldsymbol{\Phi}(\xi_j)>0$, we are able to enforce $(\overline{\mathbf{h}}^{n+1}_{i})^{\mathrm{T}}\boldsymbol{\Phi}(\xi_j)>0$ for $j = 1, \cdots,M$ under the CFL-type condition \eqref{eq:tstep}, see \cref{lemma:CFL}. However, the one-sided propagation speeds \eqref{eq:pspeed} in the central-upwind scheme \eqref{eq:FE} are estimated by the eigenvalues of the Jacobian $\frac{\partial F}{\partial \hat{U}}$ using the pointwise values at the cell interfaces. Thus, computation of these wave speeds requires positivity of the pointwise reconstruction at quadrature points, i.e., $(\mathbf{h}^{\pm}_{i+\frac{1}{2}})^{T}\boldsymbol{\Phi}(\xi_j)>0$, which is not guaranteed by $(\overline{\mathbf{h}}^{n}_{i})^{\mathrm{T}}\boldsymbol{\Phi}(\xi_j)>0$. To resolve this problem, we use the filtering strategy proposed in \cite{schlachter2018hyperbolicity} to filter $\mathbf{h}_{i+\frac{1}{2}}^{\pm}$. 

Given a polynomial $p_{\hat{y}}(\xi) = \sum_{k=1}^{K}\hat{y}_k\phi_k(\xi)$ with positive moment $\hat{y}_1$, we find the smallest possible weight $\mu'$ such that the weighted averages of the polynomial $p_{\hat{y}}(\xi)$ and the moment $\hat{y}_1$ are nonnegative at given quadrature points $\{\xi_j\}_{j = 1}^M$, i.e.,
\begin{equation}\label{eq:filtering}
    \mu' \hat{y}_1+(1-\mu')p_{\hat{y}}(\xi)\ge 0\Leftrightarrow \hat{y}_1+\sum_{k=2}^{K}(1-\mu')\hat{y}_k\phi_k(\xi_j)\ge 0, j = 1,\cdots, M,
\end{equation}
and the coefficients of the polynomial are filtered by
\begin{align}\label{eq:filterPCE}
    &\hat{\mathsf{y}}_1 = \hat{y}_1, &\hat{\mathsf{y}}_k = (1-\mu)\hat{y}_k, k = 2,\cdots, K,
\end{align}
where $\mu = \min\{\mu'+\delta, 1\}$, and we select $\delta = 10^{-10}$ in our scheme. Hence, the filtered polynomial $p_{\hat{\mathsf{y}}}(\xi) = \sum_{k=1}^{K}\hat{\mathsf{y}}_k\phi(\xi)$ is positive at given quadrature points $\{\xi_j\}_{j = 1}^M$. We filter $p_{\hat{y}}(\xi) = \sum_{k=1}^{K}\hat{y}_k\phi_k(\xi)$ and $p_{\hat{z}}(\xi) = \sum_{k=1}^{K}\hat{z}_k\phi_k(\xi)$ simultaneously by calculating the individual filtering parameters $\mu'_{\hat{y}}$ and $\mu'_{\hat{z}}$ for $p_{\hat{y}}(\xi)$ and $p_{\hat{z}}(\xi)$, respectively, through \eqref{eq:filtering}. Then the simultaneous filtering parameter is set to $\mu = \min\{\mu'_{\hat{y}}+\delta, \mu'_{\hat{z}}+\delta, 1\}$.

We will exercise the filtering strategy \eqref{eq:filtering}-\eqref{eq:filterPCE} for pointwise reconstructions.
We compute the filtering parameter $\mu^n_i$ at time $t = t^n$ for the $i$th cell for  $(\mathbf{h}^{\pm}_{i\mp\frac{1}{2}})^{\mathrm{T}}\boldsymbol{\Phi}(\xi)$ according to \eqref{eq:filtering}. The pointwise reconstructions $\mathbf{h}^{\pm}_{i\mp\frac{1}{2}}$ are then filtered by
\begin{equation}\label{eq:filterh}
\begin{aligned}
  &\left(\mathsf{h}^{\pm}_{i\mp\frac{1}{2}}\right)_1 = \left(\mathbf{h}^{\pm}_{i\mp\frac{1}{2}}\right)_1,
  &\left(\mathbf{h}^{\pm}_{i\mp\frac{1}{2}}\right)_k = (1-\mu^n_i)\left(\mathsf{h}^{\pm}_{i\mp\frac{1}{2}}\right)_k,k=2,\cdots, K.
\end{aligned}
\end{equation}
The corresponding cell average is adjusted accordingly in order to remain consistent,
\begin{equation}\label{eq:filterhavg}
    \overline{\mathsf{h}}^n_i = \frac{1}{2}\left(\mathsf{h}^{+}_{i-\frac{1}{2}}+\mathsf{h}^{-}_{i+\frac{1}{2}}\right).
\end{equation}
\begin{remark}
  To reduce oscillations in $q_\Lambda(x,t,\xi)$, we can also filter the discharge reconstructions $\mathbf{q}^{\pm}_{i-\frac{1}{2}}$.
The corresponding cell average needs to be adjusted similarly to \eqref{eq:filterhavg}. In \cref{ssec:results-sdb} when $(\alpha, \beta) = (1,3)$, we adopt this filtering approach to reduce oscillations in the discharge.
\end{remark}
As an alternative to the filtering above, one can use a convex-optimization based method \cite{boyd2004convex} to enforce the positivity of $(\mathbf{h}^{\pm}_{i\mp\frac{1}{2}})^{\mathrm{T}}\boldsymbol{\Phi}(\xi)$ at quadrature points $\{\xi_j\}_{j=1}^M$.
\subsubsection{Near-Dry State Correction}
When the polynomial $(\overline{\mathbf{h}}^{n}_{i})^{\mathrm{T}}\boldsymbol{\Phi}(\xi)\sim 0$, two issues related to the dry state may occur. One is that the first moments of the polynomials $(\mathbf{h}^{\pm}_{i\mp\frac{1}{2}})^{\mathrm{T}}\boldsymbol{\Phi}(\xi)$ may become nonpositive. This can happen even when the system is deterministic \cite{kurganov2007second}. Nonpositive first moments may lead to the failure of the filtering correction \eqref{eq:filtering}-\eqref{eq:filterPCE}. In our scheme, we adopt the following correction for nonpositive first moments. Denote the first moments of $\mathbf{h}^{\pm}_{i\mp\frac{1}{2}}$ by  $\left(\mathbf{h}^{\pm}_{i\mp\frac{1}{2}}\right)_1$, then
\begin{align}
  \textrm{if } \left(\mathbf{h}^{\pm}_{i\mp\frac{1}{2}}\right)_1\le0 \hskip 5pt 
  \textrm{then take } \mathbf{h}^{\pm}_{i\mp\frac{1}{2}}=\mathbf{0},\;\;
                      \mathbf{h}^{\mp}_{i\pm\frac{1}{2}}=2\overline{\mathbf{h}}^n_i.
\end{align}
Note that, this strategy reduces to a similar correction in the central-upwind scheme for the deterministic shallow water equations \cite{kurganov2007second}.

Another issue may happen when the matrix $\mathcal{P}(\mathbf{h}^{+}_{i+\frac{1}{2}})$ or $\mathcal{P}(\mathbf{h}^{-}_{i+\frac{1}{2}})$ is ill-conditioned, which may lead to problems with round-off errors when solving the corresponding linear system \eqref{eq:uPCE}. To resolve this issue, we extend to the stochastic model the desingularization process for the deterministic problem \cite{kurganov2007second,kurganov2018finite}. We demonstrate our correction using the matrix $\mathcal{P}(\mathbf{h}^{-}_{i+\frac{1}{2}})$ as an example. Let 
$$
\mathcal{P}(\mathbf{h}^{-}_{i+\frac{1}{2}}) = Q^{\mathrm{T}}\Pi Q,
$$
be the eigenvalue decomposition for $\mathcal{P}(\mathbf{h}^{-}_{i+\frac{1}{2}})$, where $\Pi = \text{diag}(\lambda_1,\cdots,\lambda_K)$. For $k = 1, \ldots, K$ and a given $\epsilon > 0$, define
\begin{align}\label{eq:invcorrected}
  \Pi^{\co} &= \text{diag}(\lambda^{\co}_1,\cdots,\lambda^{\co}_K),&
    \lambda^{\co}_k = \frac{\sqrt{2}\lambda_{k}}{\sqrt{\lambda^4_{k}+\max\{\lambda^4_{k},\epsilon^4\}}}.
\end{align}
In our scheme we choose $\epsilon = \Delta x$.
Then, the corrected PCE coefficient vector for the velocity $\mathbf{u}^{-}_{i+\frac{1}{2}}$ is given by 
\begin{equation}\label{eq:ucorrected}
    \mathbf{u}^{-}_{i+\frac{1}{2}} = Q^{\mathrm{T}}\Pi^{\co} Q\mathbf{q}^{-}_{i+\frac{1}{2}}.
\end{equation}
For well-conditioned $\mathcal{P}(\mathbf{h}^{-}_{i+\frac{1}{2}})$, the correction \eqref{eq:ucorrected} reduces to the system \eqref{eq:uPCE}, but when $\mathcal{P}(\mathbf{h}^{-}_{i+\frac{1}{2}})$ is near singular, the discharge needs to be recomputed, 
\begin{equation}\label{eq:qcorrected}
    \mathbf{q}^{-}_{i+\frac{1}{2}} = \mathcal{P}(\mathbf{h}^{-}_{i+\frac{1}{2}})\mathbf{u}^{-}_{i+\frac{1}{2}},
\end{equation}
in order to keep the scheme consistent.
\begin{remark}
If there is no uncertainty, the correction \eqref{eq:invcorrected}-\eqref{eq:ucorrected} reduces to the deterministic velocity desingularization in \cite{kurganov2007second,kurganov2018finite}.
\end{remark}

\section{Numerical Results}\label{sec:results}
In this section, we summarize numerical tests to illustrate robustness of the proposed schemes for the SGSWE system \eqref{eq:swesg5} with different uncertainty models and parametric distributions. For simplicity we consider only one-dimensional stochastic spaces ($d=1$) associated to a Beta density over $[-1,1]$,
\begin{align*}
  \rho(\xi) \coloneqq \rho^{(\alpha,\beta)}(\xi) &= C(\alpha,\beta) (1-\xi)^\alpha (1+\xi)^\beta, & 
  C(\alpha,\beta)^{-1} &= 2^{\alpha+\beta+1} B(\beta+1, \alpha+1)
\end{align*}
where $B(\cdot,\cdot)$ is the Beta function, and the parameters $\alpha, \beta > -1$ can be chosen freely and control how mass concentrates at $\xi = 1$ and $\xi = - 1$, respectively. In particular $\alpha = \beta = 0$ corresponds to the uniform distribution on $[-1,1]$. The numerical examples in the coming sections consist of the following numerical experiments:
\begin{itemize}[leftmargin=*]
  \item \cref{ssec:results-sbt}: Stochastic bottom topography model, comparing the SGSWE solution \eqref{eq:swesg5} with $K=9$ and $K=17$ with the uniform density, $\alpha = \beta = 0$. The results are compared against a $K=9$ stochastic collocation solution computed with $S = 100$ stochastic points. The stochastic collocation solution for, e.g., the water height $h$, is computed via quadrature,
    {\begin{align*}
      h_{SC}(x,t,\xi) &\coloneqq \sum_{j=1}^K \hat{h}_{SC,j}(x,t) \phi_k(\xi), & \hat{h}_{SC,j}(x,t) &\coloneqq \sum_{s=1}^S h(x,t,\zeta_s) \phi_j(\zeta_s) z_s
    \end{align*}}
    where $\{\zeta_s, z_s\}_{s=1}^S$ is the $S$-point $\rho$-Gaussian quadrature rule, and $h(x,t,\zeta_s)$ is a numerical solution to a deterministic specialization of the SWE \eqref{eq:swesg1} obtained by setting $\xi = \zeta_s$ and numerically solved using a deterministic central-upwind scheme.
  \item \cref{ssec:results-sws}: Stochastic water surface model, testing the well-balanced property of the scheme with $\alpha = \beta = 0$
  \item \cref{ssec:results-sdb}: Stochastic discontinuous bottom topography model, investigating the effects of different values of $M$ used to enforce $\mathcal{P}(\hat{h}) > 0$. This example also investigates different distributions, with $(\alpha,\beta) = (3,1)$ and $(\alpha,\beta) = (1,3)$. 
\end{itemize}

%


 The parameter $\theta$ in the generalized minmod limiter is set to $\theta = 1.3$ for the first two examples, and $\theta=1$ for the third example. The gravitational constant $g$ is set to $g=1$ for the first two examples, and $g=2$ for the last example. We filter only the water heights $h_\Lambda$ except in the very last numerical test. In the third numerical example, when $(\alpha,\beta) = (1,3)$, we filter both the water heights and the discharges of the water. In all examples, the CFL condition we use in our simulation is \eqref{eq:tstep}. However, we observe that in practice, a relaxed time step $c\Delta t^n (c>1)$ will not result in loss of hyperbolicity and the plots are similar visually to the results obtained from the condition \eqref{eq:tstep}. We believe this is because condition \eqref{eq:h-positivity} is only a sufficient but not a necessary condition to the hyperbolicity of SGSWE.

Our numerical results will report quantile regions indicating the range of behavior for solutions. These quantile regions are computed empirically by computing the corresponding PCEs on $10^5$ randomly sampled points from the density $\rho$ on $[-1,1]$. 

For a fixed spatial grid, the computational cost depends on the dimension $K$ of the chosen polynomial subspace $P_{\Lambda}$. In order to compute the propagation speeds \eqref{eq:pspeed}, the eigenvalues of the $2 K \times 2 K$ Jacobian $J(\mathbf{U})$ matrix must be computed, making this cost increase as $K$ increases. In addition, to preserve hyperbolicity, we need to ensure the positivity of the water height at all the quadrature points for every spatial-temporal point (\cref{thm:hyperbolicity}). Therefore, the cost for preserving the hyperbolicity is at most of order $O(K^3)$ per cell per time step (\cref{cor:tchakaloff}). These relations are formally independent of the dimension $d$ of the stochastic space, but in practice $K$ can grow considerably as $d$ is increased. For example, one may choose $P_{\Lambda}$ to be the space of the polynomials with degree up to $L$. In this case, $K = \binom{L+d}{d}$. When $L\ge d$, as $d$ increases, $K$ increases and also therefore does the computational cost. In this paper, we only consider numerically the case $d=1$. We plan to investigate higher dimensional stochastic space in a future work. However, note that the developed theory in \cref{sec:model} and \cref{sec:hyperbolicity} extends to $d>1$.
\subsection{Stochastic Bottom Topography}\label{ssec:results-sbt}
We consider the shallow water system with deterministic initial conditions
\begin{equation}\label{eq:IV1}
w(x,0) = \left\{\begin{aligned}&1&&x<0\\ &0.5&&x>0\end{aligned}\right.,\quad q(x,0) = 0,
\end{equation} 
and with a stochastic bottom topography
\begin{equation}\label{eq:bottom 1}
B(x,\xi) = \left\{\begin{aligned}0.125(\cos(5\pi x)+2)+0.125\xi,\quad&|x|<0.2\\0.125+0.125\xi,\quad&\text{otherwise}\end{aligned}\right..
\end{equation}

     In this example, we model $\xi$ as a uniform random variable ($\alpha=\beta=0$). The corresponding orthonormal basis functions $\phi_j$ are the orthonormal Legendre polynomials on $[-1,1]$ with density $\rho(\xi)=\frac{1}{2}$. Initially, the highest possible bottom barely touches the initial water surface at $x=0.5$. In \cref{fig:ex1-w} and \cref{fig:ex1-q}, we use a uniform grid size $\Delta x$ over the physical domain $x \in [-1,1]$, and compute up to terminal time $t = 0.8$.  We present the numerical solutions for $K=9$ and $K=17$ using $M = 17$ and $M=33$-point Gaussian quadrature nodes, respectively, to enforce the positivity condition \eqref{eq:h-positivity}. 
\begin{figure}[htbp]
    \centering
    \includegraphics[width = .49\textwidth]{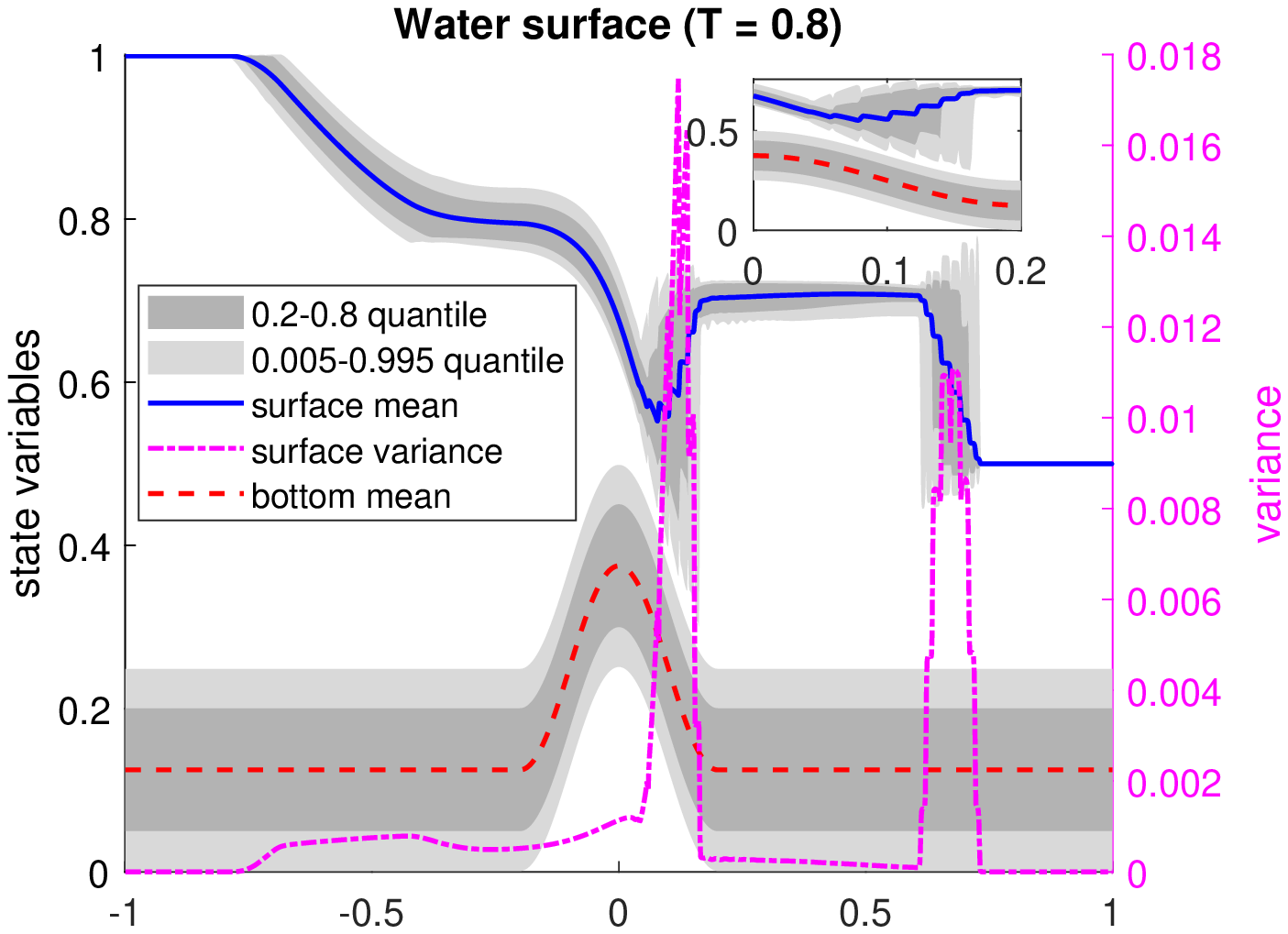}
    \hfill
    \includegraphics[width = .49\textwidth]{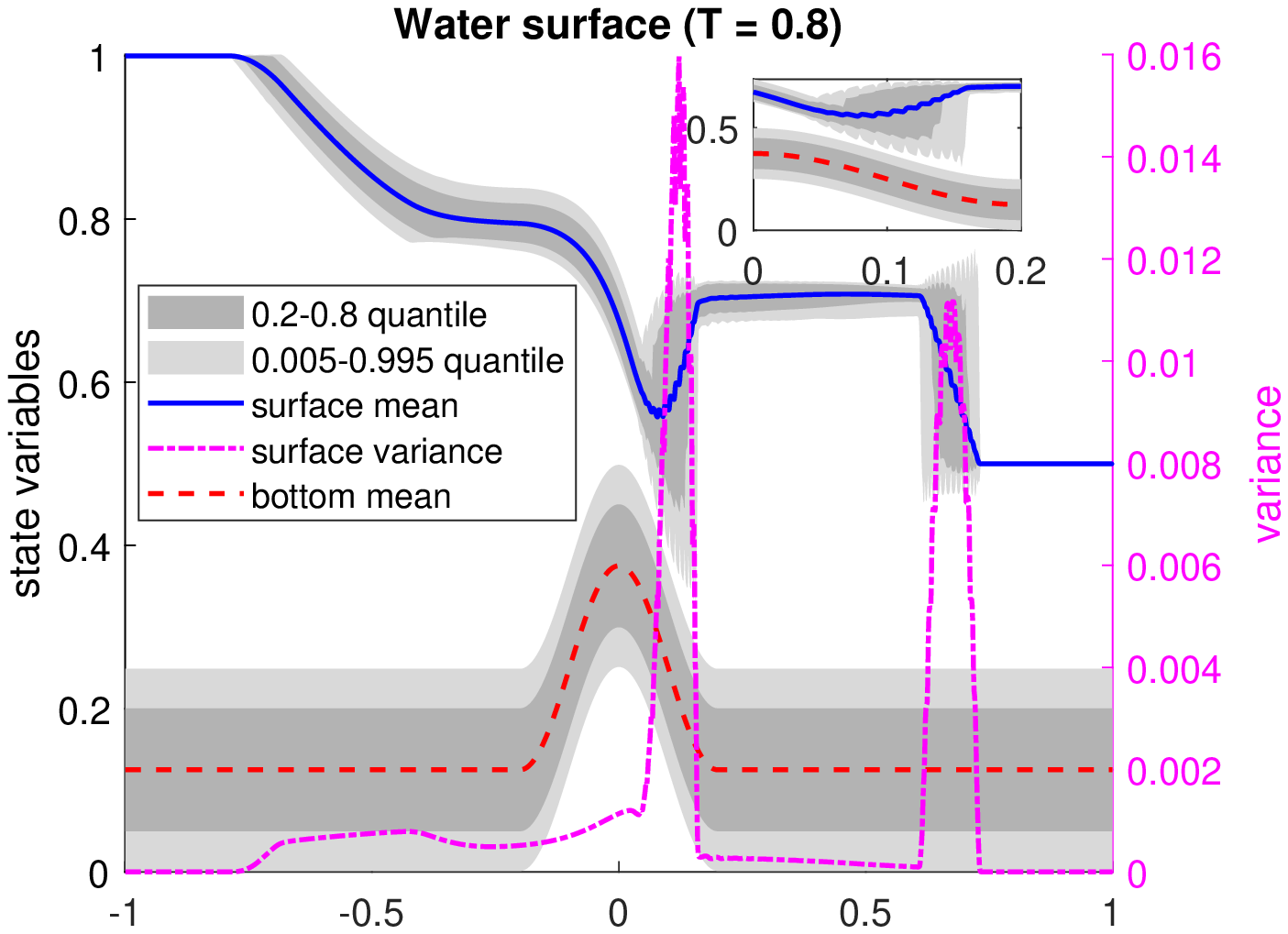}
    \hfill    
    \includegraphics[width = .49\textwidth]{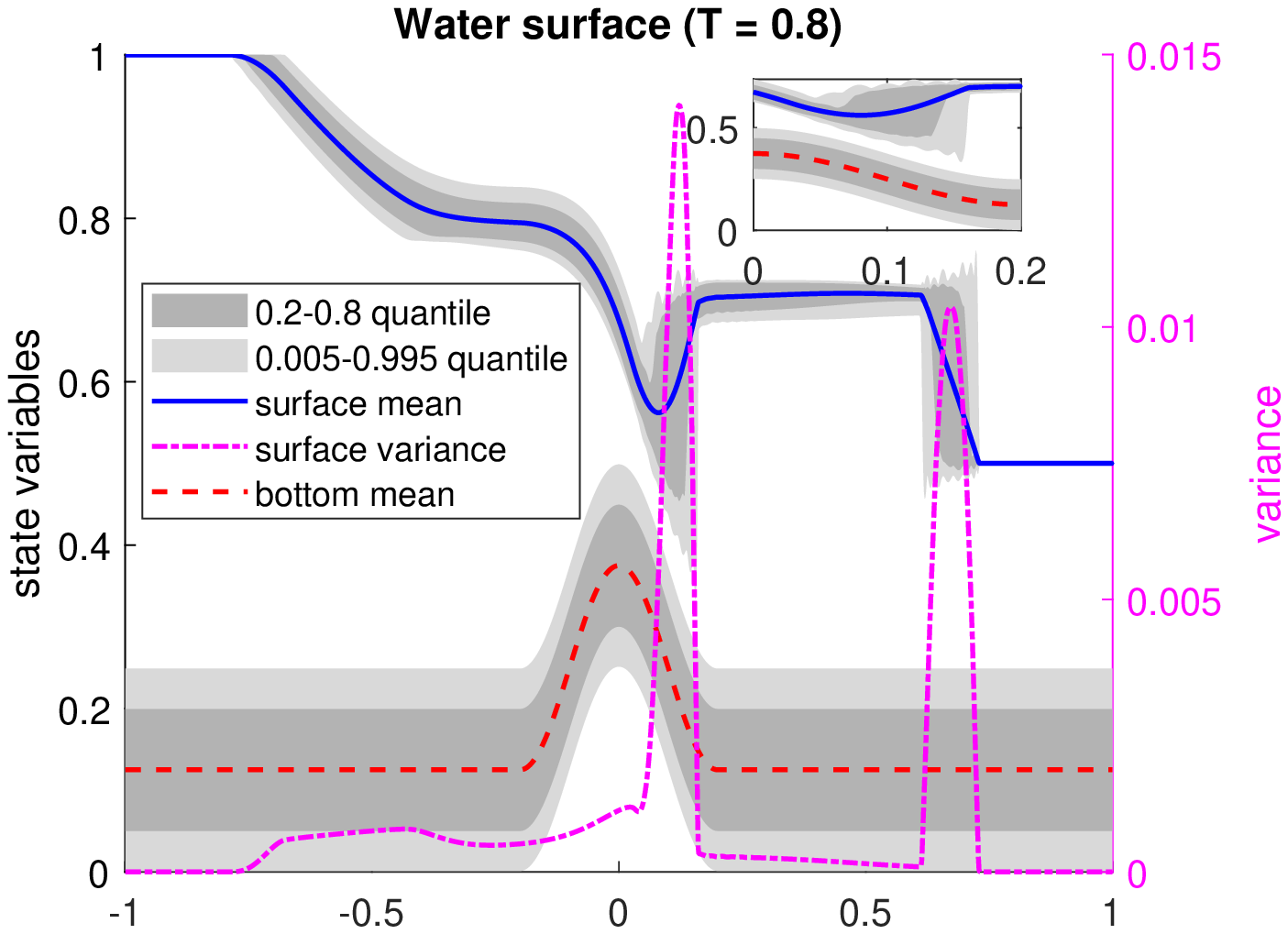}    
    \caption{Results for \cref{ssec:results-sbt}, water surfaces. Top left: stochastic Galerkin, $K=9, \Delta x = 1/800$. Top right: stochastic Galerkin, $K=17, \Delta x = 1/800$. Bottom: stochastic collocation, $K=9, \Delta x = 1/800$.}
    \label{fig:ex1-w}
\end{figure}
\begin{figure}[htbp]
  \centering
  \includegraphics[width = .49\textwidth]{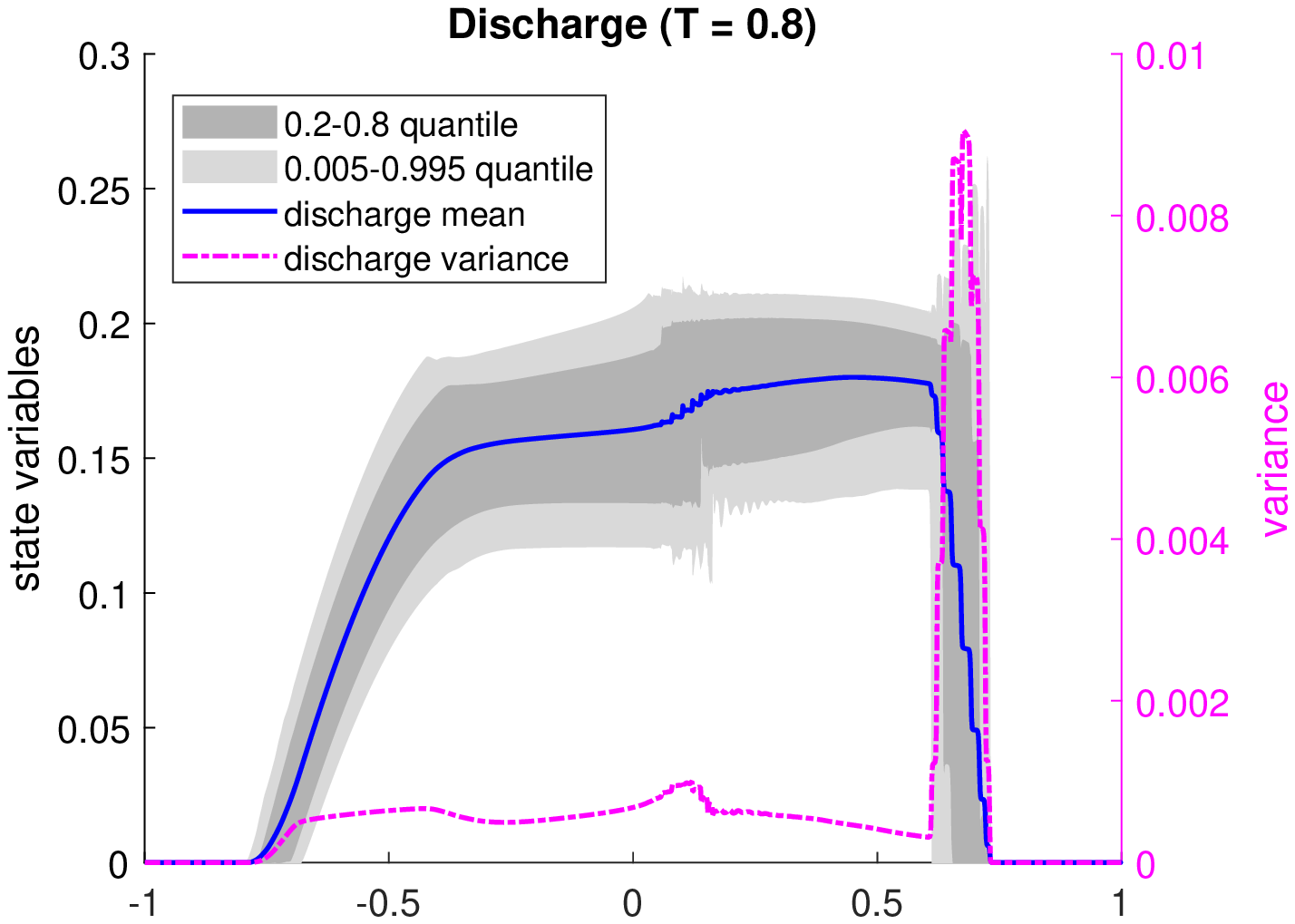}
  \hfill
  \includegraphics[width = .49\textwidth]{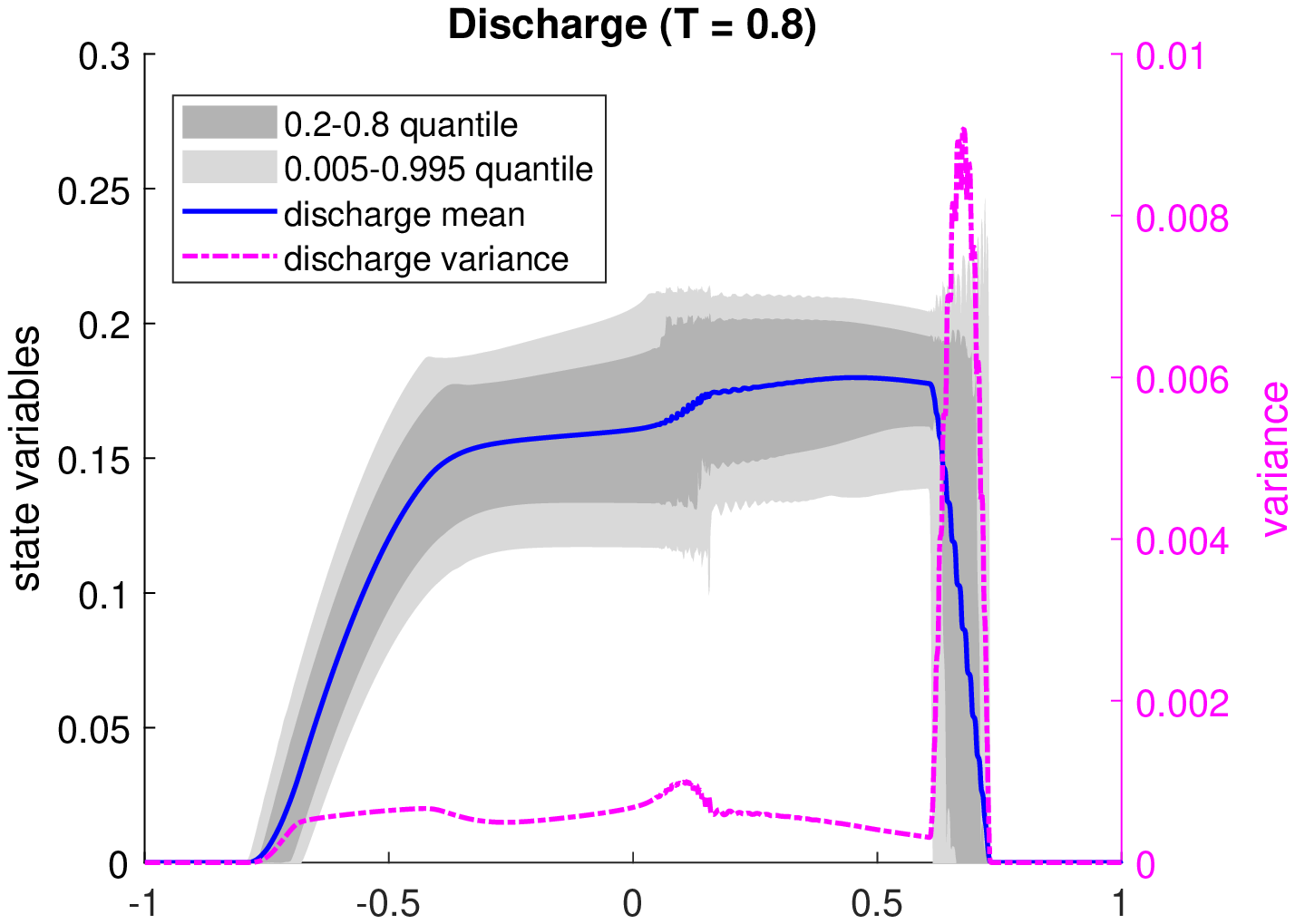}   
  \hfill    
  \includegraphics[width = .49\textwidth]{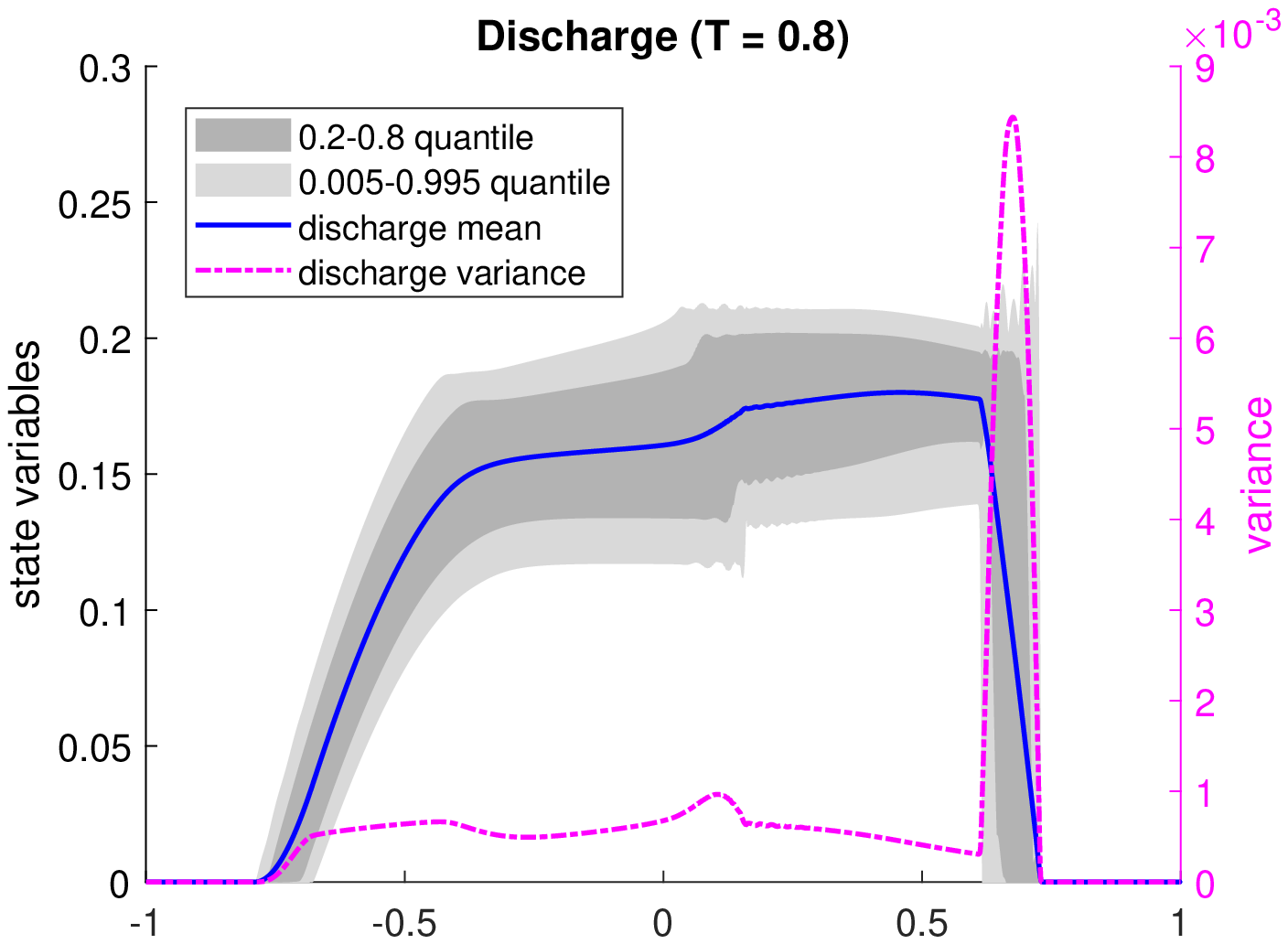}      
  \caption{Results for \cref{ssec:results-sbt}, discharges. Top left: stochastic Galerkin, $K=9, \Delta x = 1/800$. Top right: stochastic Galerkin, $K=17, \Delta x = 1/800$. Bottom: stochastic collocation, $K=9, \Delta x = 1/800$.}
  \label{fig:ex1-q}
\end{figure}

The $99\%$ confidence region of the water surface stays above the $99\%$ confidence region of the bottom function in the first three (top left, top right, bottom left) subfigures in \cref{fig:ex1-w}. 

For reference and comparison, a solution obtained by the stochastic collocation method ($100$ quadrature points, $K=9$-term PCE as explained in \cref{sec:results}) is computed. Results for water surface and discharge are shown in the right subfigures of \cref{fig:ex1-w} and \cref{fig:ex1-q}, respectively. We note that the stochastic collocation solution is a different PDE model, so we do not necessarily expect the numerical results from the SG and SC solvers to be identical for a fixed, finite $K$. In particular, we do not expect ``convergence" of one model to the other as, say $S \uparrow \infty$ and/or $\Delta x \downarrow 0$. However, the results in the figures do show substantial similarity between these solutions. The numerical solution obtained from the collocation method is less oscillatory near sharp gradients of water surface and discharges.

We observe small oscillations near sharp gradients of the water surface and discharge in all of the figures. We investigate the oscillations for the discharge more carefully in \cref{fig:ex1-q-oscillations}. We observe that both higher resolution and larger $K$ can reduce the magnitude of the oscillations that appear in quantiles.
\begin{figure}[htbp]
  \centering
  \hfill
  \includegraphics[width = .32\textwidth]{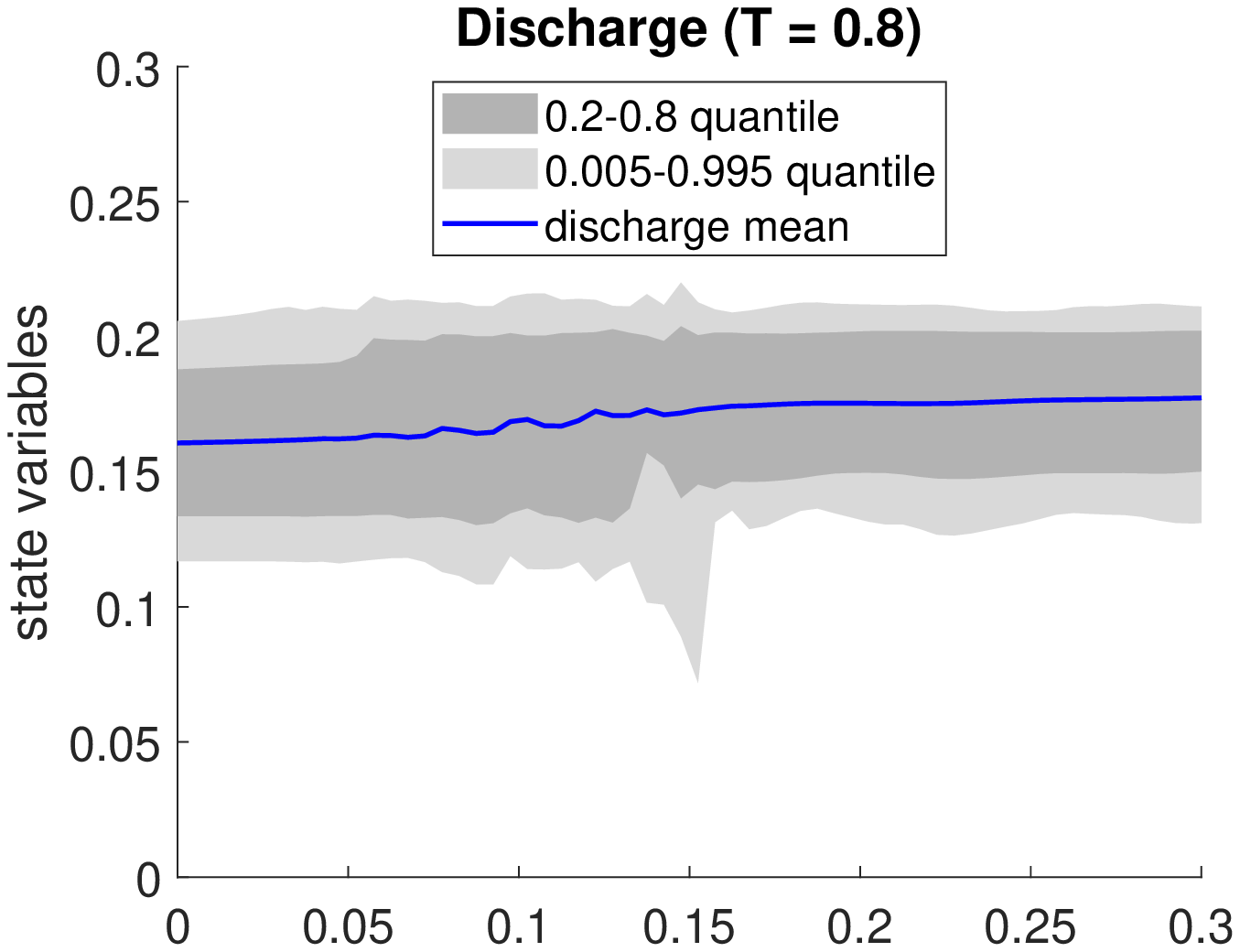}
  \hfill
  \includegraphics[width = .32\textwidth]{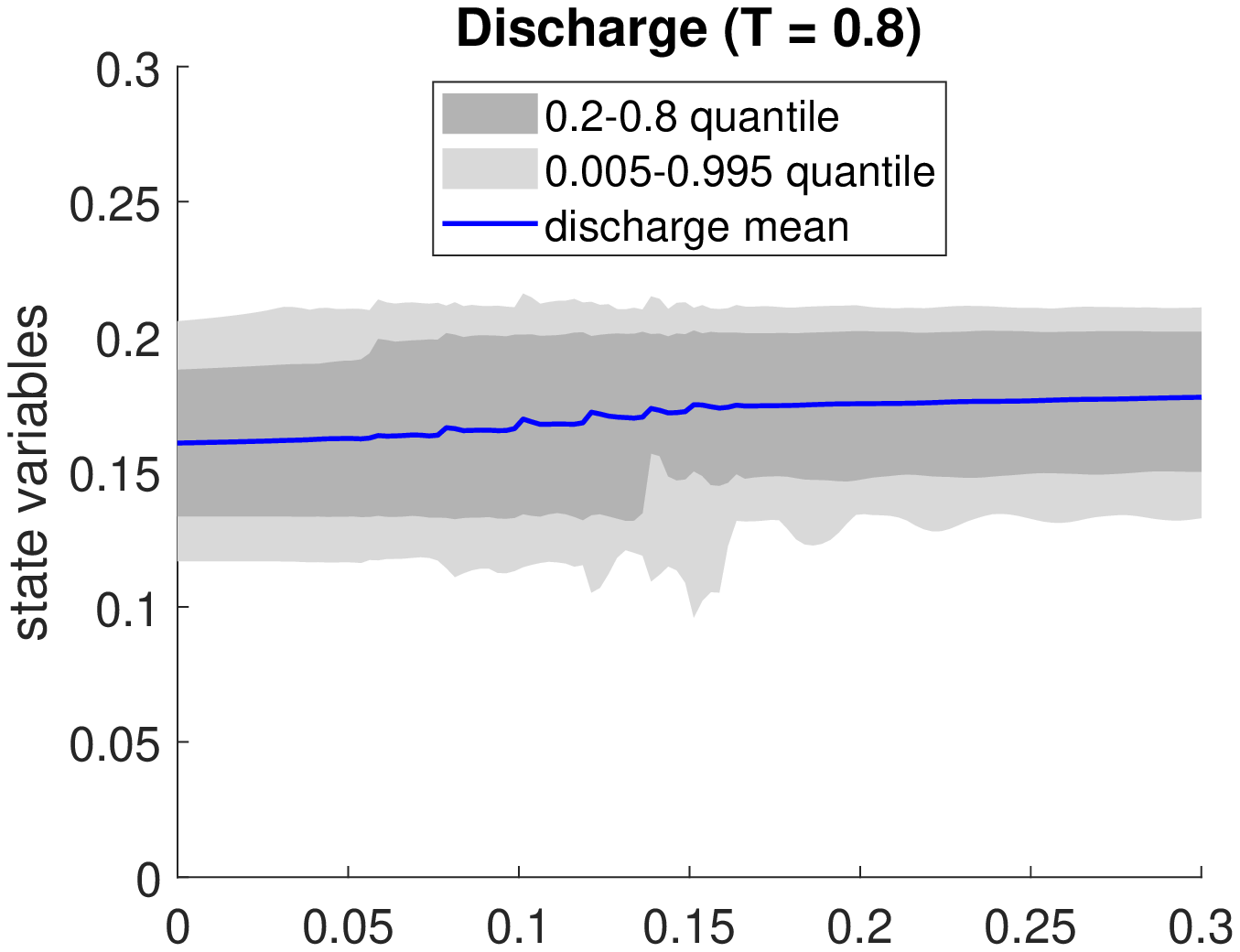} 
  \hfill    
  \includegraphics[width = .32\textwidth]{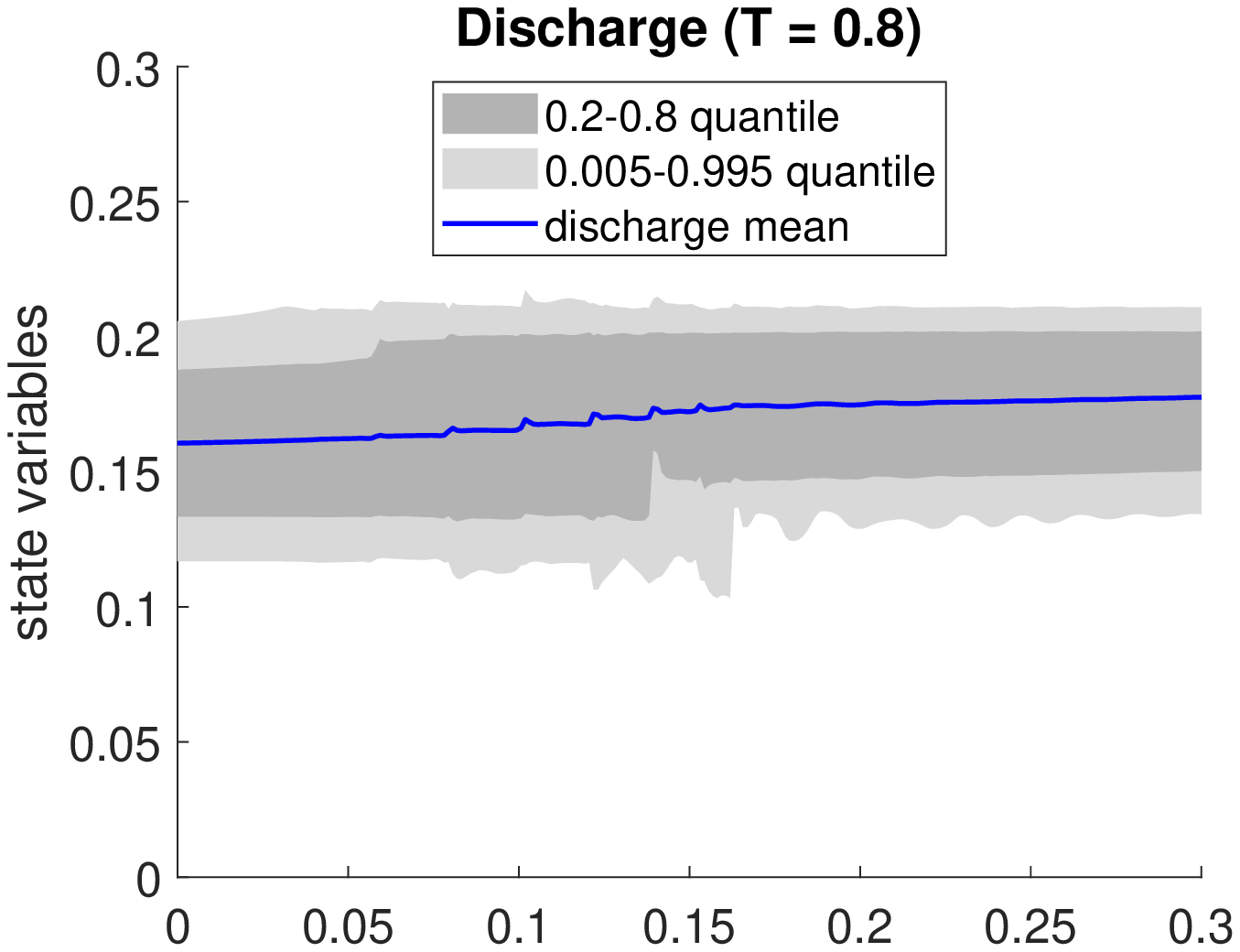}
  \hfill
  \includegraphics[width = .32\textwidth]{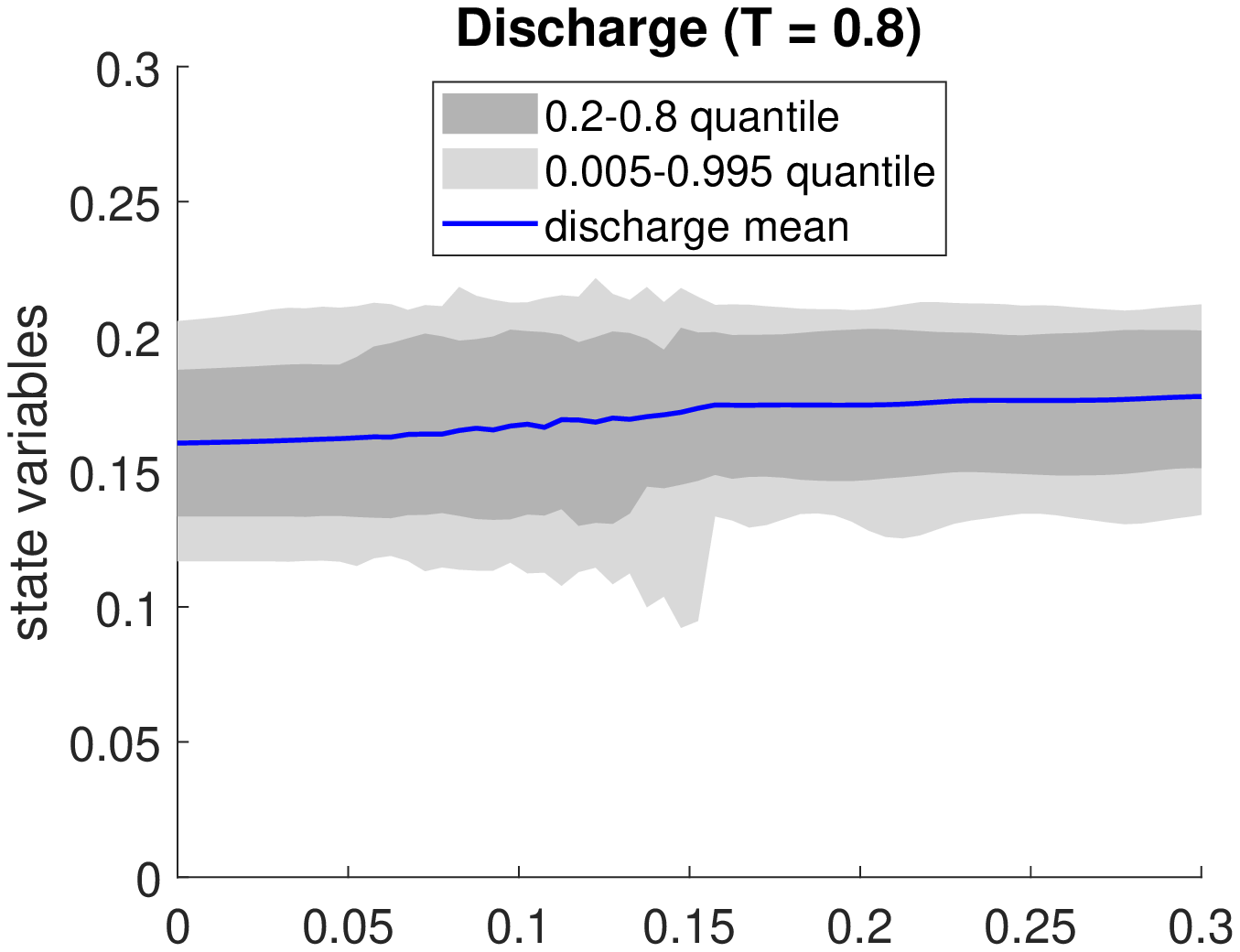}
  \hfill
  \includegraphics[width = .32\textwidth]{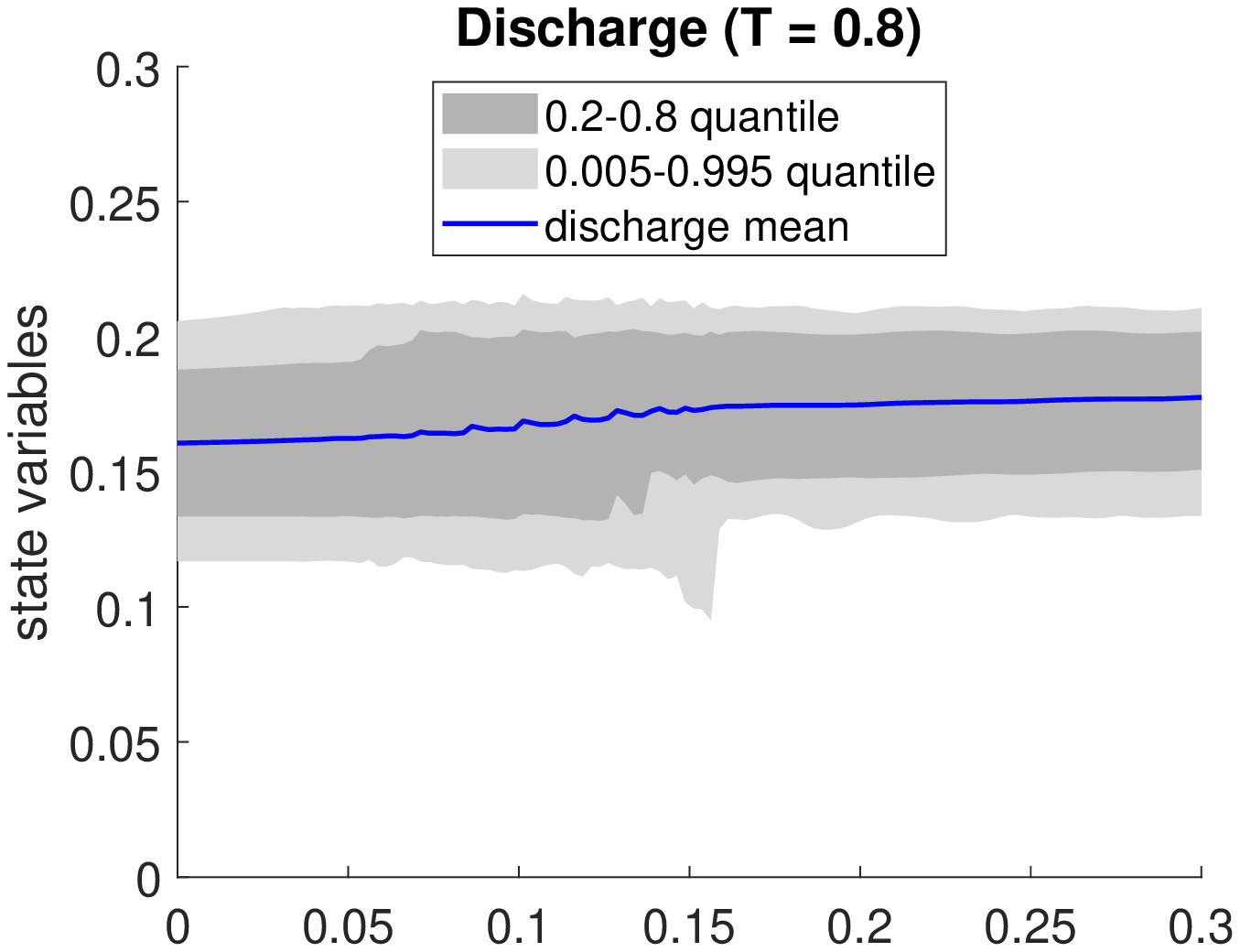}
  \hfill    
  \includegraphics[width = .32\textwidth]{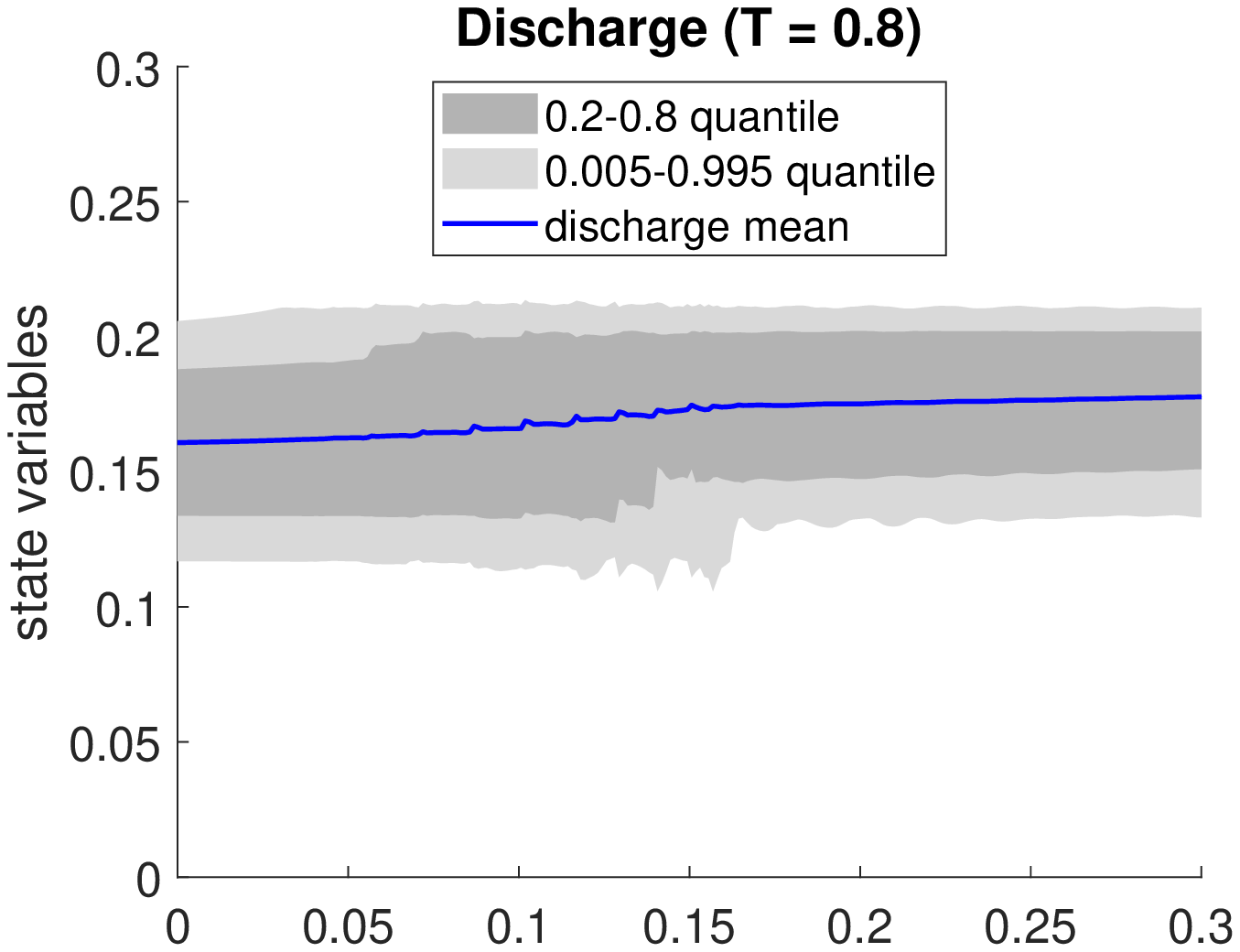}
  \hfill
  \caption{Results for \cref{ssec:results-sbt}, discharges on $[0, 0.3]$ for different values of $K$ and $\Delta x$, zoom view. Top: $K=9$;  bottom: $K = 13$. Left: $\Delta x = 1/200$; middle: $\Delta x = 1/400$; right $\Delta x = 1/800$.}
  \label{fig:ex1-q-oscillations}
\end{figure}

\subsection{Stochastic Water Surface}\label{ssec:results-sws}
Consider a stochastic shallow water system with a deterministic bottom function 
\begin{equation}\label{eq:bottom 2}
B(x,\xi) = \left\{\begin{aligned}
&10(x-0.3),&&0.3\le x\le 0.4,\\
&1-0.0025\sin^2(25(\pi(x-0.4))),&&0.4\le x\le 0.6,\\
&-10(x-0.7),&&0.6\le x\le 0.7,\\
&0&&\text{otherwise},\end{aligned}\right.
\end{equation}
and a stochastic water surface,
\begin{equation}\label{eq:IV2}
w(x,0,\xi) = \left\{\begin{aligned}&1.001+0.001\xi&&0.1<x<0.2,\\ &1&&\text{otherwise},\end{aligned}\right.\qquad q(x,0,\xi) \equiv 0. 
\end{equation}
We again model $\xi$ as a uniform random variable ($\alpha = \beta = 0$) with $K = 9$. 
A small uncertain region was originally at $0.1\le x\le 0.2$, where the water surface is slightly perturbed. 
The $17$-point $\rho$-Gaussian quadrature rule is used to enforce the condition \eqref{eq:h-positivity} to guarantee hyperbolicity. We compute the cell averages of the vector of PCE coefficients for water surface and discharges at terminal time $t=1.0$ on the physical domain $[-1,1]$ with uniform grid size $\Delta x = 1/400$. We observe from the mid figure of \cref{fig:ex2} that the perturbed water surface with uncertainties propagate along different directions. The right-moving wave interacts with the nonflat bottom and get partially reflected. The magnitude of the uncertainties doesn't seem to exceed the magnitude of the initial uncertainties, which illustrate the well-balanced property of our scheme. 

\begin{figure}[h]
    \centering
    \includegraphics[width = .49\textwidth]{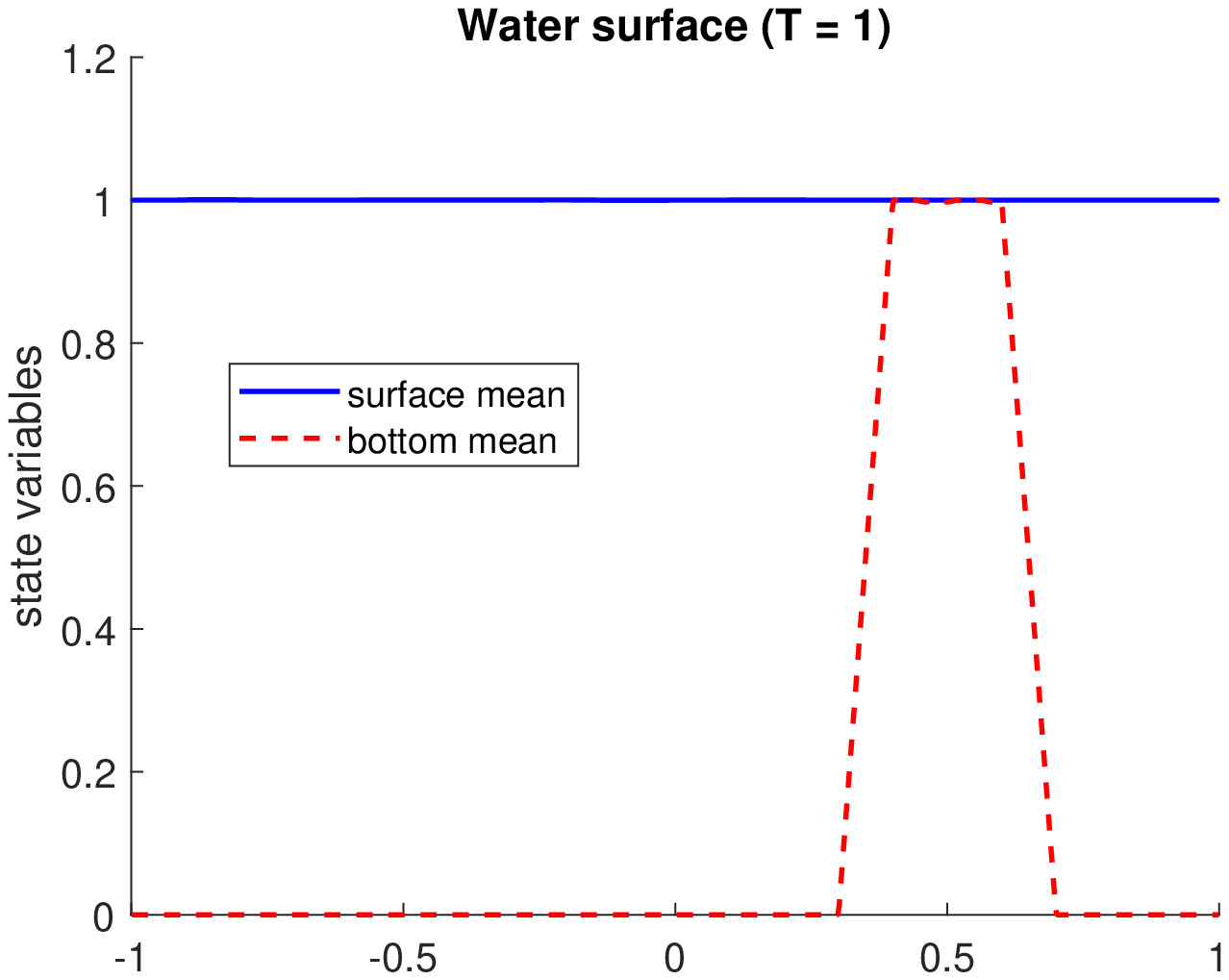}
    \hfill
    \includegraphics[width = .49\textwidth]{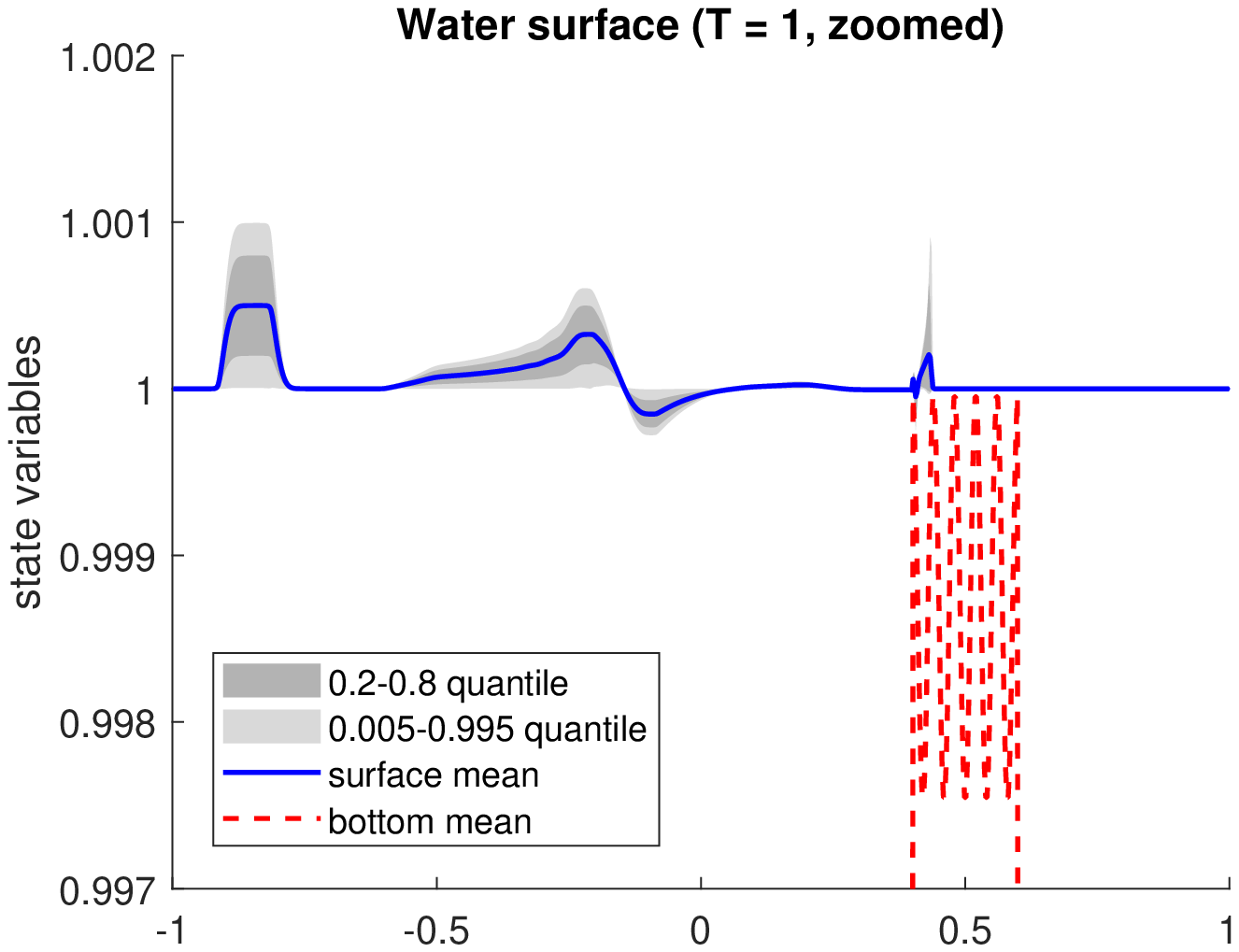}
    \hfill
    \includegraphics[width = .49\textwidth]{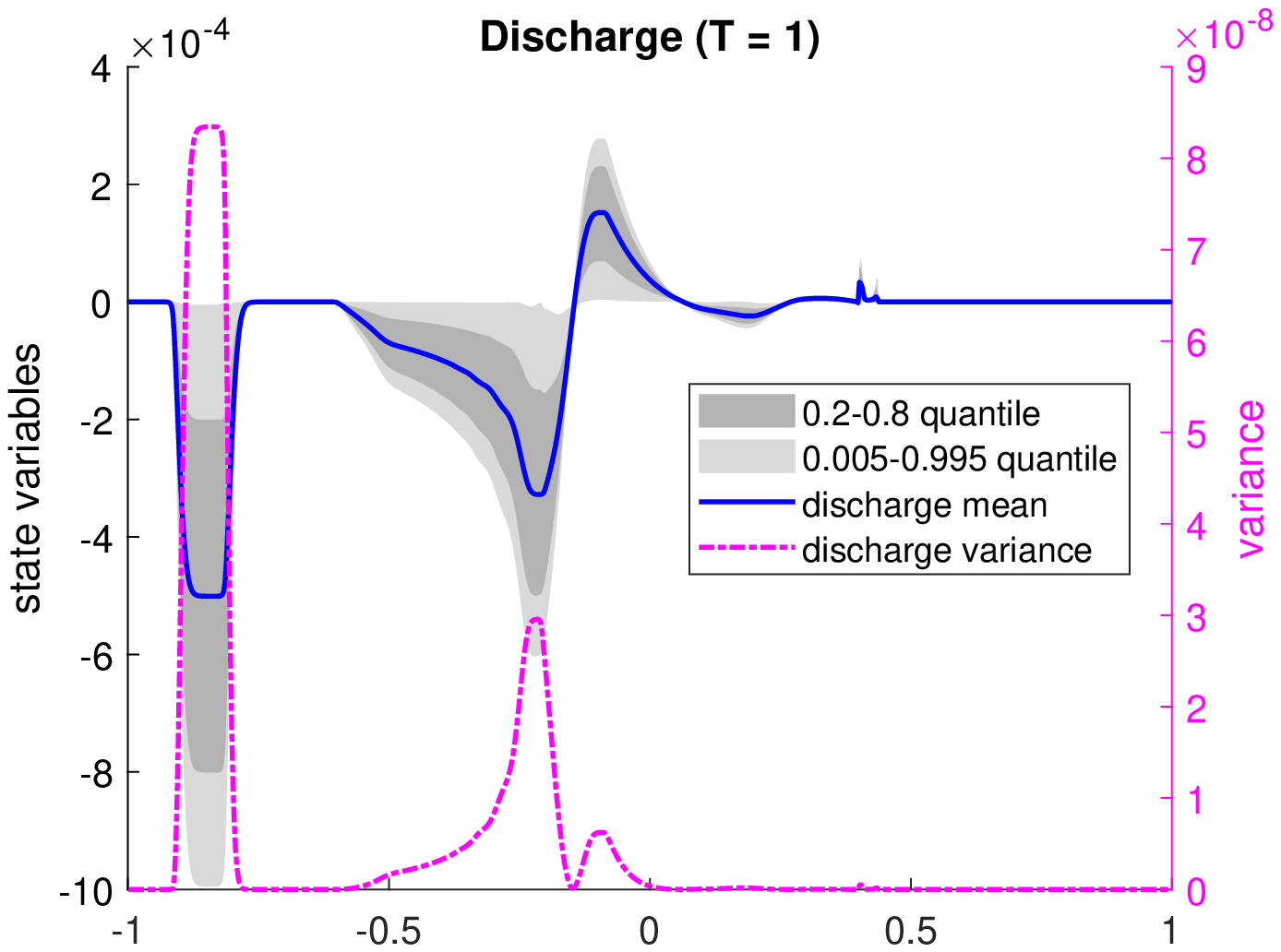}    
    \caption{Results for \cref{ssec:results-sws}: water surface (left), zoomed water surface (mid), and discharge (right) at $t=1$ for \eqref{eq:bottom 2}-\eqref{eq:IV2}, $K = 9$.}
    \label{fig:ex2}
\end{figure}

\subsection{Stochastic Discontinuous Bottom}\label{ssec:results-sdb}
For our last example, consider the shallow water system with deterministic initial conditions,
\begin{equation}\label{eq:IV3}
w(x,0,\xi) = \left\{\begin{aligned}&5.0&&x\le0.5,\\&1.6&&x>0.5,\end{aligned}\right.\qquad u(x,0,\xi) = \left\{\begin{aligned}&1.0&&x\le0.5,\\ &-2.0&&x>0.5,\end{aligned}\right.
\end{equation}
and a stochastic discontinuous bottom
\begin{equation}\label{eq:bottom 3}
B(x,\xi) = \left\{\begin{aligned}&1.5+0.1\xi&&x\le0.5,\\ &1.1+0.1\xi&&x>0.5,\end{aligned}\right.
\end{equation}
where initially we model $\xi$ as a random variable with Beta density defined by $(\alpha,\beta) = (3,1)$, which is more concentrated toward $\xi = -1$, and hence the bottom topography has higher probability of having smaller values.
At time $t = 0$, the highest possible bottom barely touches the initial water height at $x=0.5$. 
We compute the numerical solutions of $K=9$-term PCE with an $M=17$-point $\rho$-Gaussian quadrature to enforce the condition \eqref{eq:h-positivity}. We compute on a physical domain $x \in [0,1]$ with uniform cell size $\Delta x = 1/400$ up to terminal time $t=0.15$. 

In this example we observe over- and undershoots in the neighborhood of the bottom discontinuity for both the water surface $w$ and the discharge $q$ (see \cref{fig:ex3-beta24}). This phenomenon also occurs in deterministic version of \eqref{eq:IV3}-\eqref{eq:bottom 3} when numerical solutions are computed using the schemes from \cite{audusse2004fast, perthame2001kinetic}.
\begin{figure}[h]
    \centering
    \includegraphics[width = .49\textwidth]{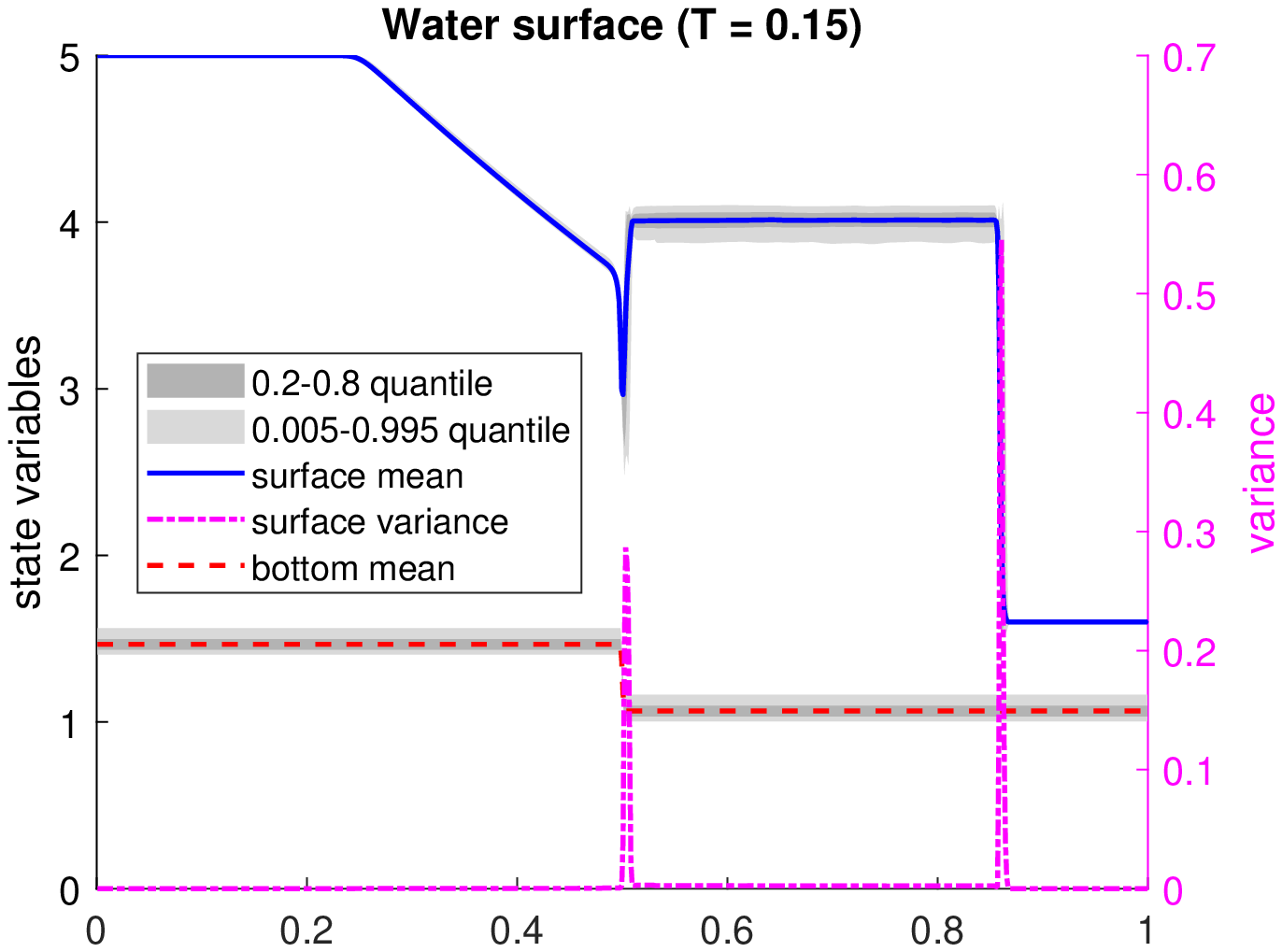}
    \includegraphics[width = .49\textwidth]{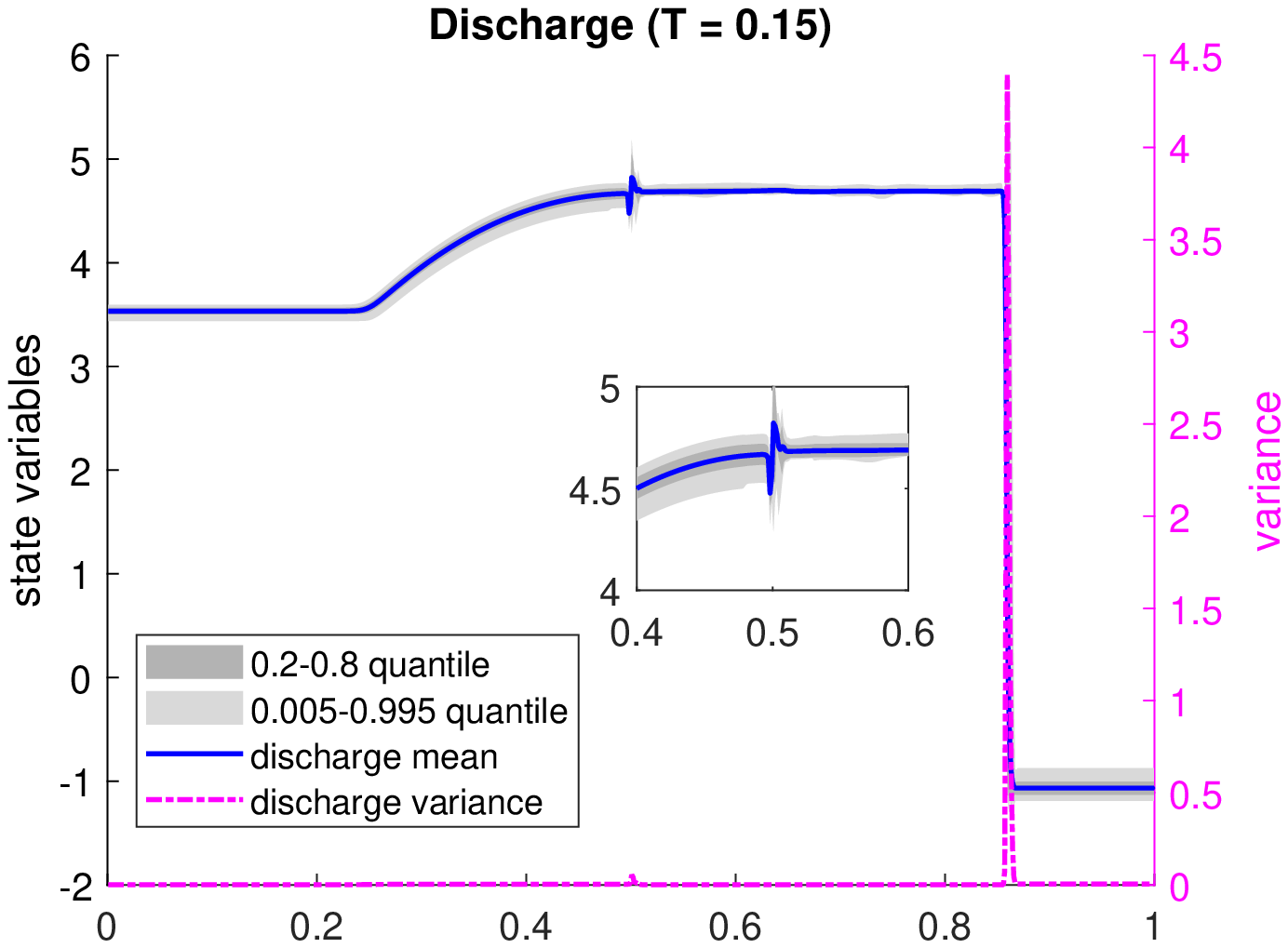}
    \caption{\cref{ssec:results-sdb} results: $K = 9$, $t = 0.15$, $(\alpha,\beta) = (3,1)$. Left figure: water surface and bottom. Right figure: discharge.}
    \label{fig:ex3-beta24}
\end{figure}
In addition we observe in this example a numerical artifact resulting from our enforcement of positivity of the water height \eqref{eq:h-positivity} at only a finite number of points: although the $99\%$ quantile region of water heights lies above 0, the $\xi$-global minimum of the water height in some cells can still be negative. Since $\mathcal{P}(\hat{h}) > 0$ only requires positivity of $h_\Lambda$ at a finite number of points, there are (low-probability) regions of the domain where the height can be negative. Note, however, that the SGSWE system is still hyperbolic and simulation can continue, despite low probability of negative water height.  

Nevertheless, the existence of negative water heights impose doubts on the applicability of the SGSWE model. Fortunately, this situation can be mitigated by increasing the number of points $M$ where positivity of $h_\Lambda$ is enforced. We observe that if the positivity of the water height is enforced at more points, the stochastic region of negative height shrinks. We demonstrate this with results in \cref{tab:table1}. In particular we observe that (a) the negative region occurs on a subinterval containing $\xi$ values greater than the maximum quadrature point, and (b) the probability of $\xi$ lying in this region is quite small.


\begin{table}
  \begin{center}
  \resizebox{0.7\textwidth}{!}{
    \renewcommand{\tabcolsep}{0.4cm}
    \renewcommand{\arraystretch}{1.3}
    {\scriptsize
      \begin{tabular}{cccc} 
      \toprule
        $M$ & $\max_m \xi_m$ & Negative Region $N_M$ & $\mathrm{Pr}[\xi \in N_M]$ \\\midrule
      $15$ & $0.934077$ & $[0.934079,1]$ & $5.75\times 10^{-6}$  \\ 
      $17$ & $0.946839$ & $[0.946899,1]$& $2.43\times 10^{-6}$\\
      $19$ & $0.956205$ & $[0.956320,1]$ & $1.12\times 10^{-6}$  \\
      $21$ & $0.963310$ & $[0.963980,1]$ & $5.18\times 10^{-7}$  \\
      \bottomrule
      \end{tabular}
    }
 }
  \end{center}
 \caption{Numerical study of $\xi$-region and associated probabilities where the water height is negative.}\label{tab:table1}
\end{table}


In a separate experiment, we also compute the numerical results when $\xi$ is modeled as random according to a $(\alpha,\beta) = (1,3)$ distribution, which is more concentrated toward $\xi = 1$.
\cref{fig:ex3-beta42} shows that at the terminal time the ``pressure'' from stochastic bottom that skews positively causes more oscillations on the water surface and the discharge compared to \cref{fig:ex3-beta24}. In this experiment, we filter both the water heights and the discharges.
\begin{figure}[h]
    \centering
    \includegraphics[width = .49\textwidth]{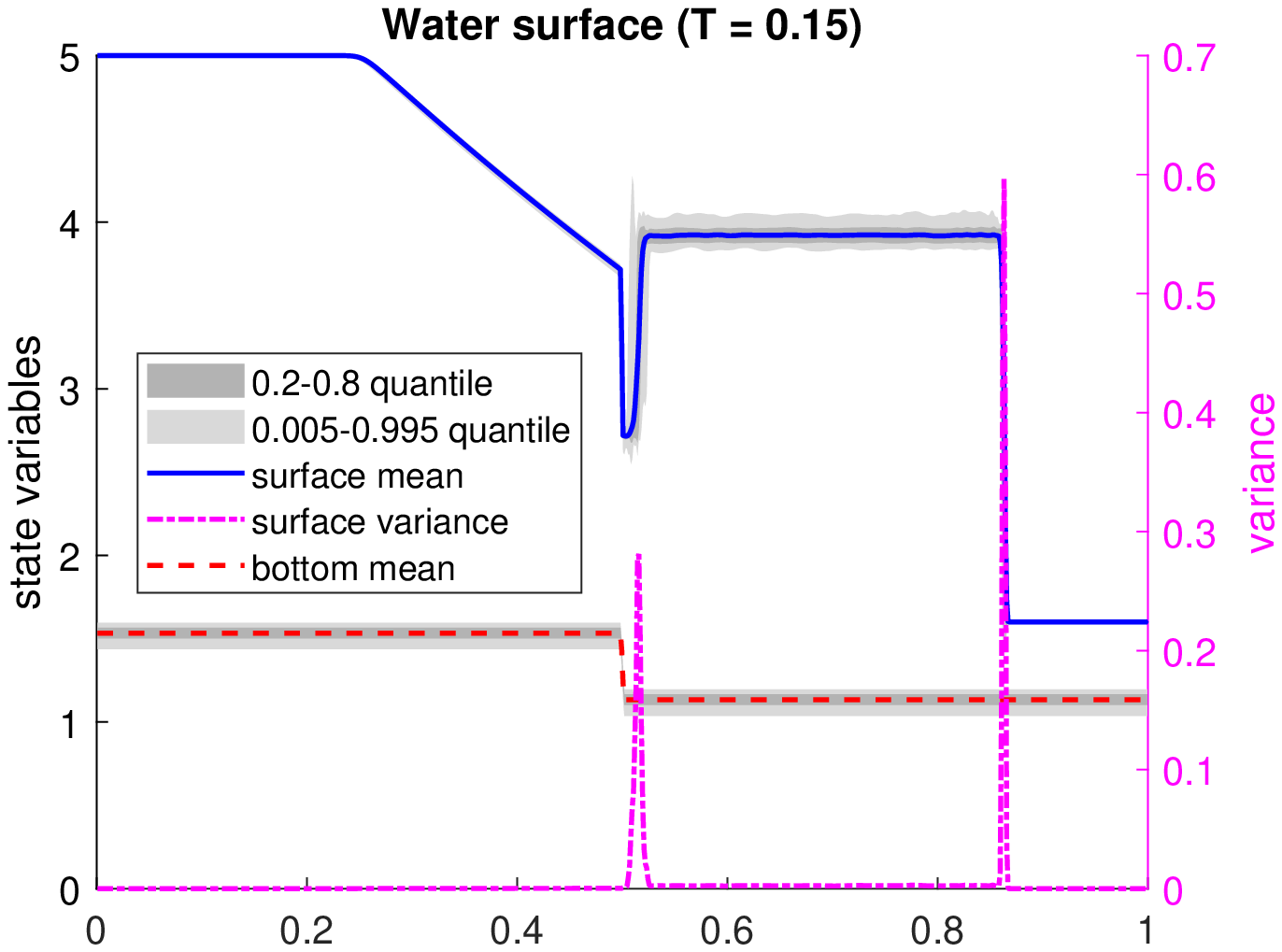}
    \includegraphics[width = .49\textwidth]{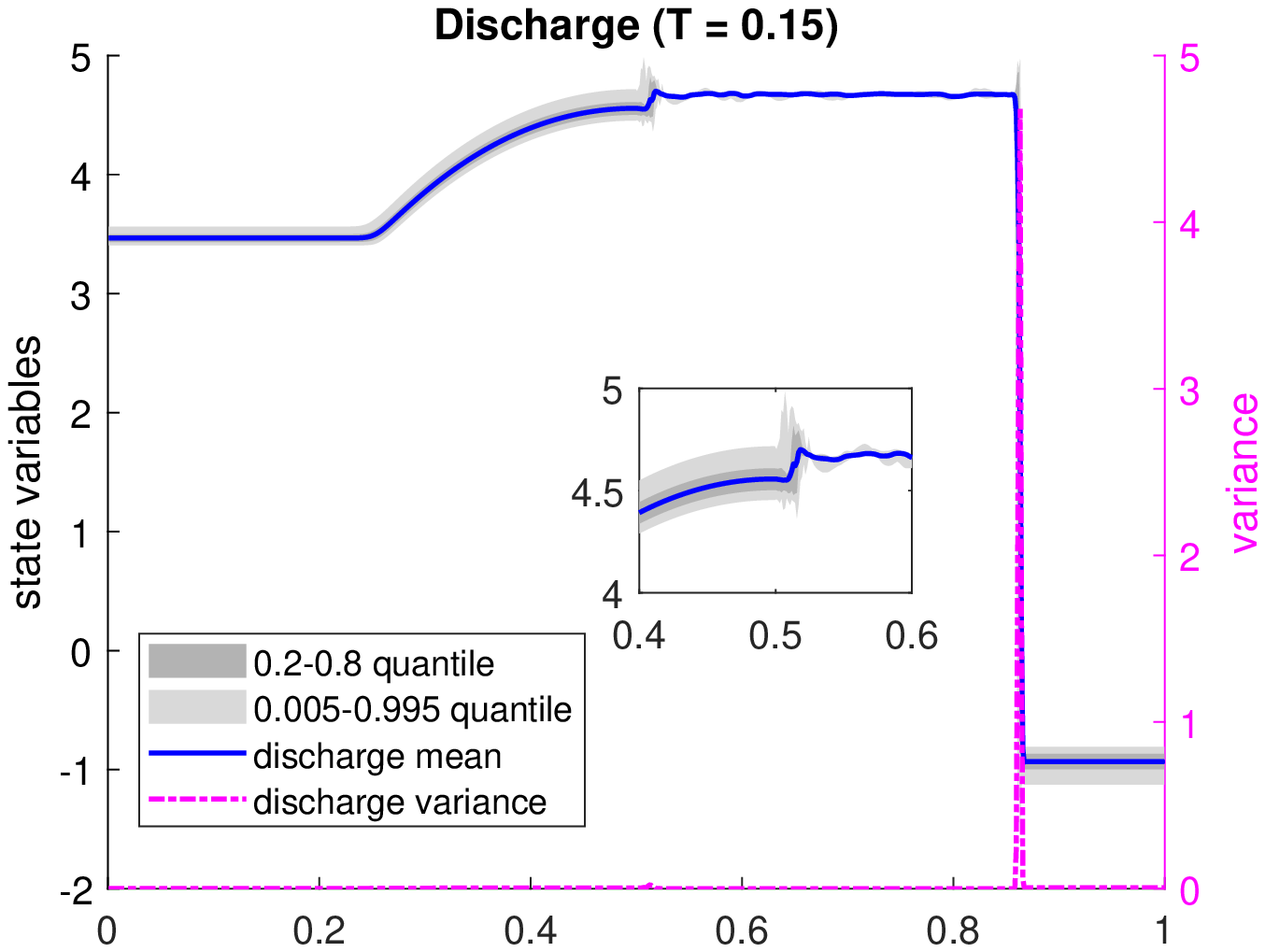}
    \caption{Numerical results with $(\alpha,\beta) = (1,3)$, $K = 9$, $t = 0.15$. Left figure: water surface and bottom. Right figure: discharge.}
    \label{fig:ex3-beta42}
\end{figure}
\appendix
\section{The Semi-Discrete Second-Order Central-Upwind \\Scheme}\label{append:cuscheme}
We briefly describe the central-upwind schemes for $1$-D balance laws. For a complete description and derivation, we refer to \cite{kurganov2001semidiscrete}. Consider the balance law,
\begin{equation}\label{eq:balancedlaw}
\mathbf{U}_t+(F(\mathbf{U}))_x = S(\mathbf{U})
\end{equation}
For a uniform mesh with cells $\mathcal{C}_i\coloneqq\left[x_{i-1/2},x_{i+1/2}\right]$ of size $|\mathcal{C}_i|\equiv \Delta x$, centered at $x_i = (x_{i-1/2}+x_{i+1/2})/2$, and assume that at certain a time level, the cell averages
\begin{equation}\label{eq:caverages}
    \overline{\mathbf{U}}^n_i \approx \frac{1}{\Delta x}\int\mathbf{U}_i(t^n)dx := \frac{1}{\Delta x}\int_{\mathcal{C}_i}\mathbf{U}(x,t^n)dx
\end{equation}
are available. The cell averages are then used to construct a non-oscillatory second-order linear piecewise reconstructions,
\begin{equation}\label{eq:reconp}
    \widetilde{\mathbf{U}}^n_i(x) = \mathbf{U}_i^n+(\mathbf{U}_x)_{i}(x-x_i),\quad x\in\mathcal{C}_i,
\end{equation}
whose slopes $(\mathbf{U}_x)_i$ are obtained by generalized minmod limiter,
\begin{equation}
    (\mathbf{U}_x)_i = \text{minmod}\left(\theta\frac{\mathbf{U}^n_{i+1}-\mathbf{U}^n_i}{\Delta x},\frac{\mathbf{U}^n_{i+1}-\mathbf{U}^n_{i-1}}{2\Delta x},\theta\frac{\mathbf{U}^n_{i}-\mathbf{U}^n_{i-1}}{\Delta x}\right),
\end{equation}
where the minmod function is defined to be 
\begin{equation*}
\text{minmod}(z_1,z_2,\cdots)\coloneqq\left\{\begin{aligned}
    &\min\{z_1,z_2,\cdots\}\quad&&\text{if } z_i>0,\forall i,\\ 
    &\max\{z_1,z_2,\cdots\}\quad&&\text{if } z_i<0,\forall i,\\ 
    &0\quad&&\text{otherwise},\end{aligned}\right.
\end{equation*}
and the parameter $\theta\in[1,2]$ controls the amount of numerical dissipation. 
The left- and right-sided reconstructions at the endpoints of $\mathcal{C}_i$ are,
\begin{equation}\label{eq:precon}
    \mathbf{U}^{+}_{i-\frac{1}{2}} = \overline{\mathbf{U}}^n_i-\frac{\Delta x}{2}(\mathbf{U}_x)_{i},\quad \mathbf{U}^{-}_{i+\frac{1}{2}} = \overline{\mathbf{U}}^n_i+\frac{\Delta x}{2}(\mathbf{U}_x)_{i}.
\end{equation}
The semidiscrete form of the central-upwind scheme is then given by,
\begin{equation}
\dfrac{d}{dt}\overline{\mathbf{U}}_i(t) = -\dfrac{\mathcal{F}_{i+\frac{1}{2}}-\mathcal{F}_{i-\frac{1}{2}}}{\Delta x}+\overline{\mathbf{S}}_i,
\end{equation}
where the numerical flux $\mathcal{F}$ and the source term $\overline{\mathbf{S}}_i$ are given in \eqref{eq:fluxcusg} and \eqref{eq:semidiscretewsg}, respectively.

\section{Proof of \cref{cor:tchakaloff}}
The Corollary is immediate from the following Lemma:
\begin{lem}\label{lemma:tch-temp}
  For some $M \leq \dim P^3_\Lambda$, there is an $M$-point positive quadrature rule that is exact on $P^3_\Lambda$. 
\end{lem}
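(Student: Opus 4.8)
The plan is to recognize this as an instance of Tchakaloff's theorem and to prove it via Carath\'eodory's theorem for convex cones. Set $L \coloneqq \dim P^3_\Lambda$ and fix any basis $p_1, \ldots, p_L$ of $P^3_\Lambda$; since $\rho$ has finite polynomial moments of all orders, each $p_i$ is $\rho$-integrable. Define the \emph{moment map} $T \colon \R^d \to \R^L$ by $T(\xi) \coloneqq (p_1(\xi), \ldots, p_L(\xi))^T$ and the \emph{moment vector} $\mathbf{m} \coloneqq \int_{\R^d} T(\xi) \rho(\xi)\, d\xi \in \R^L$. A positive $M$-point rule $\{(\xi_m,\tau_m)\}_{m=1}^M$ that is exact on $P^3_\Lambda$ is precisely a representation $\mathbf{m} = \sum_{m=1}^M \tau_m T(\xi_m)$ with every $\tau_m > 0$, since matching all basis elements $p_i$ extends to exactness on all of $P^3_\Lambda$ by linearity. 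Thus it suffices to write $\mathbf{m}$ as a nonnegative combination of at most $L$ vectors drawn from $W \coloneqq \{T(\xi) : \xi \in \operatorname{supp}\rho\}$.

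First I would show $\mathbf{m}$ lies in the closed convex cone $\overline{C}$ generated by $W$, using the dual cone: for any $y$ with $\langle T(\xi), y\rangle \ge 0$ for all $\xi \in \operatorname{supp}\rho$, integrating the nonnegative integrand gives $\langle \mathbf{m}, y\rangle = \int \langle T(\xi), y\rangle \rho \, d\xi \ge 0$, so $\mathbf{m} \in \overline{C}$. The delicate step is to upgrade this to membership in the (not necessarily closed) cone $C$ itself, which I would do by induction on $L' \coloneqq \dim \operatorname{span} W$. If $\mathbf{m}$ lies in the relative interior of $\overline{C}$, then it lies in the interior of $C$ (interiors of a convex set and of its closure coincide), so $\mathbf{m} \in C$. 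Otherwise a supporting hyperplane furnishes some $y \neq 0$ with $\langle \mathbf{m}, y\rangle = 0$ and $\langle T(\xi), y\rangle \ge 0$ for $\xi \in \operatorname{supp}\rho$; since this integrand is nonnegative with vanishing integral, $\langle T(\xi), y\rangle = 0$ for $\rho$-a.e.\ $\xi$, so $W$ essentially lies in the hyperplane $y^\perp$ of dimension $L'-1$, and the induction applies in strictly lower dimension.

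Once $\mathbf{m} \in C$, Carath\'eodory's theorem for convex cones writes $\mathbf{m}$ as a nonnegative combination of at most $\dim \operatorname{span} W \le L$ elements of $W$. Discarding the vectors carrying zero coefficient leaves strictly positive weights together with nodes $\xi_m \in \operatorname{supp}\rho \subseteq \R^d$, and $M \le L = \dim P^3_\Lambda$, as required; combined with the dimension bound in \eqref{eq:PL} this yields \cref{cor:tchakaloff}. The main obstacle is the upgrade from $\overline{C}$ to $C$ in the second paragraph: membership in the closed cone is immediate from the dual-cone estimate, but genuine membership in $C$, guaranteeing that an actual finite quadrature rather than merely a limit of such rules exists, is the analytic heart of the argument. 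Alternatively, one may invoke the measure-theoretic form of Tchakaloff's theorem directly to bypass this induction.
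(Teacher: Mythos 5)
Your proof is correct, but it takes a genuinely different route from the paper's. The paper treats Tchakaloff's theorem as a black box: it cites the Bayer--Teichmann result for the total-degree space $P_{T,\ell^\ast}$ containing $P^3_\Lambda$, obtains a positive rule with $M^\ast \le \dim P_{T,\ell^\ast}$ nodes (a potentially much larger count), and then trims the node set down to $Q = \dim P^3_\Lambda$ points by applying Carath\'eodory's theorem to the representation of the moment vector $\mathbf{e}$ as a positive combination of the vectors $\mathbf{\Psi}(\zeta^\ast_m)$. You instead reprove Tchakaloff's theorem from scratch directly for the subspace $P^3_\Lambda$: the dual-cone computation places the moment vector in the closed conic hull of the moment-map image, the relative-interior/supporting-hyperplane induction upgrades this to membership in the cone itself, and conic Carath\'eodory then delivers at most $\dim P^3_\Lambda$ nodes in one step. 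What the paper's route buys is brevity --- the analytic heart, your upgrade from $\overline{C}$ to $C$, is outsourced to the citation; what your route buys is self-containedness, nodes guaranteed to lie in $\operatorname{supp}\rho$, and no detour through the larger total-degree space. One small point to tighten: the supporting-hyperplane step gives $\langle T(\xi), y\rangle = 0$ only for $\rho$-a.e.\ $\xi$, so to conclude that $W$ lies in $y^\perp$ you should either invoke continuity of the polynomial $\xi \mapsto \langle T(\xi), y\rangle$ (a continuous nonnegative function with vanishing integral vanishes on all of $\operatorname{supp}\rho$) or replace $W$ by the image of $\operatorname{supp}\rho$ with the null set removed, which leaves the moment vector unchanged and still yields admissible nodes. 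With that remark your argument is complete.
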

The veracity of this lemma immediately yields $M \leq \dim P^3_\Lambda$ in \cref{cor:tchakaloff}. The second bound in that corollary results from chaining this with the dimension bound in \eqref{eq:PL}. Thus, we need only prove the above Lemma, which in turn is a simple consequence of Tchakaloff's theorem:
\begin{lem}[Tchakaloff's Theorem, \cite{bayer_proof_2006}]\label{lemma:tchakaloff}
  Let $P_{T,\ell}$ denote the space of polynomials of degree up to $\ell$ on $\R^d$:
  \begin{align*}
    P_{T,\ell} \coloneqq \mathrm{span} \left\{ \zeta^\nu \;\big|\; \sum_{J=1}^d \nu_J \leq \ell \right\}.
  \end{align*}
  Then for some $M \leq \dim P_{T,\ell}$, there exists a set of quadrature nodes $\{\zeta_m\}_{m=1}^M$ and positive weights $\{\tau_m\}_{m=1}^M$ such that 
  \begin{align*}
    \int_{\R^d} p(\zeta) \rho(\zeta) d{\zeta} &= \sum_{m=1}^M p(\zeta_m) \tau_m, & p &\in P_{T,\ell}.
  \end{align*}
\end{lem}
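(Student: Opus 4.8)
The plan is to reduce the existence of the rule to a statement about finite-dimensional convex cones and then apply the conic version of Carath\'eodory's theorem. First I would fix a basis $\phi_1,\ldots,\phi_N$ of $P_{T,\ell}$ with $N \coloneqq \dim P_{T,\ell}$ (taking $\phi_1 \equiv 1$), and define the moment map $\Phi\colon \R^d \to \R^N$ by $\Phi(\zeta) = (\phi_1(\zeta),\ldots,\phi_N(\zeta))$. Since $\rho$ is assumed to have finite moments through degree $\ell$, each $\phi_i$ is $\rho$-integrable, so the moment vector $m \coloneqq \int_{\R^d} \Phi(\zeta)\rho(\zeta)\,d\zeta \in \R^N$ is well-defined. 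The key observation is that a positively weighted quadrature rule $\{(\zeta_m,\tau_m)\}_{m=1}^M$ exact on $P_{T,\ell}$ is nothing other than a representation $m = \sum_{j=1}^M \tau_j\,\Phi(\zeta_j)$ with $\tau_j > 0$: matching each component means matching each basis function $\phi_i$, and exactness on all of $P_{T,\ell}$ then follows by linearity.

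With this reformulation, it suffices to show that $m$ lies in the convex cone $C$ generated by the image $\{\Phi(\zeta) : \zeta \in \R^d\}$. Granting this, the conic Carath\'eodory theorem states that any point of a conical hull in $\R^N$ is a nonnegative combination of at most $N$ of the generators. Hence $m = \sum_{j=1}^M \tau_j \Phi(\zeta_j)$ with $\tau_j > 0$ and $M \le N = \dim P_{T,\ell}$, which is exactly the claimed bound on the number of nodes (the positivity of the weights is automatic, and the normalization $\sum_j \tau_j = m_1 = \int 1\cdot\rho = 1$ is recovered from the first component since $\phi_1 \equiv 1$).

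The hard part will be establishing that $m$ belongs to the cone $C$ itself, and not merely to its closure $\overline{C}$. Containment in $\overline{C}$ is routine: $m$ is an average of the vectors $\Phi(\zeta)$ against the nonnegative measure $\rho\,d\zeta$, so a separating-hyperplane argument places it in the closed conical hull. To promote this to $m \in C$, I would argue that if $m$ sat on the relative boundary of $\overline{C}$, there would be a nonzero $a \in \R^N$ with $\langle a,\Phi(\zeta)\rangle \ge 0$ for all $\zeta$ and $\langle a,m\rangle = 0$. Writing $p(\zeta) \coloneqq \langle a,\Phi(\zeta)\rangle = \sum_i a_i\phi_i(\zeta) \in P_{T,\ell}$, this says $p$ is a nonnegative polynomial with $\int_{\R^d} p\,\rho\,d\zeta = 0$; because $\rho$ is a density, $p$ must vanish $\rho$-almost everywhere, hence on $\mathrm{supp}(\rho)$. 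One then restricts the entire construction to the real algebraic variety $\{p = 0\}$, which still carries all the mass, and induct on the dimension $N$ of the moment space, which strictly decreases. This closure-to-cone promotion is the technical heart of the argument and is precisely what is carried out carefully in \cite{bayer_proof_2006}; the remaining ingredients, namely Carath\'eodory's theorem and the linear-algebra translation of exactness, are standard.
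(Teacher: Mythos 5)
The paper does not actually prove this statement: it is imported verbatim as Tchakaloff's theorem with a citation to \cite{bayer_proof_2006}, and is then used as a black box to establish the neighboring \cref{lemma:tch-temp} via a Carath\'eodory trimming argument. Your sketch therefore does strictly more than the paper does, and it follows the standard modern proof outline: encode exactness of a positive rule as membership of the moment vector $m$ in the conical hull $C$ of the moment curve $\Phi(\R^d)$, then invoke the conic Carath\'eodory theorem to cut any conic representation down to at most $N = \dim P_{T,\ell}$ atoms with strictly positive weights. That reduction and the Carath\'eodory step are correct, and they closely parallel the convex-hull argument the paper does spell out when it trims the Tchakaloff rule down to $\dim P^3_\Lambda$ points. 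You are also right to flag that the entire difficulty is promoting $m \in \overline{C}$ to $m \in C$; your supporting-hyperplane argument is sound in its case analysis (a convex cone and its closure share a relative interior, so a point of $\overline{C}\setminus C$ must lie on the relative boundary, yielding a nonnegative $p \in P_{T,\ell}$ with $\int p\,\rho = 0$), and the induction on the variety $\{p=0\}$ is a legitimate route to finish. As written, however, that step is a pointer to \cite{bayer_proof_2006} rather than a self-contained argument; since the paper itself treats the lemma as a citation this is acceptable, but if a complete proof were demanded, the termination of the recursion (the restricted moment space on $\{p=0\}$ having strictly smaller dimension at each stage) is where all the remaining work lives.
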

Now given $P^3_\Lambda$, let $\ell^\ast$ denote the maximum polynomial degree of any element in $P^3_\Lambda$:
\begin{align*}
  \ell^\ast \coloneqq \sup_{p \in P^3_\Lambda} \deg p = \max_{k=1, \ldots, K} \deg \phi_k,
\end{align*}
which is finite. Then clearly we have $P^3_\Lambda \subseteq P_{T,\ell^\ast}$.
By \cref{lemma:tchakaloff}, there is some $M^\ast \leq \dim P_{T,\ell^\ast}$ such that $\{\zeta^\ast_m\}_{m=1}^{M^\ast}$ and $\{\tau_m^\ast\}_{m=1}^{M^\ast}$ are nodes and (positive) weights, respectively, corresponding to a quadrature rule that is exact on $P_{\Lambda}$ (since it's exact on the larger set $P_{T,\ell^\ast}$). Note that if $M^\ast \leq \dim P_\Lambda^3 \eqqcolon Q$, then the result of \cref{lemma:tch-temp} is immediate, so we assume otherwise. Let $\left\{\psi_k\right\}_{k=1}^Q$ denote any basis for $P_\Lambda^3$, and define
\begin{align*}
  \mathbf{\Psi}(\zeta) \coloneqq \left[ \psi_1(\zeta), \;\; \psi_2(\zeta), \;\; \ldots \;\; \psi_Q(\zeta) \right]^T \in \R^Q.
\end{align*}
Then exactness of the quadrature rule on $P^3_\Lambda$ implies the vector-valued equality,
\begin{align*}
  \sum_{m=1}^{M^\ast} \tau^\ast_m \mathbf{\Psi}(\zeta^\ast_m) &= \mathbf{e}, & (e)_k &\coloneqq \int_{\R^d} \psi_k(\zeta) \rho(\zeta) d \zeta.
\end{align*}
I.e., $\mathbf{e} \in \R^Q$ lies in the convex hull of $\left\{ \mathbf{\Psi}(\zeta^\ast_m)\right\}_{m=1}^{M^\ast}$. By Carath\'eodory's Theorem, there must be a size-$Q$ subset of nodes $\left\{ \zeta_m\right\}_{m=1}^Q \subset \left\{ \zeta_m^\ast \right\}_{m=1}^{M^\ast}$, with positive weights $\left\{ \tau_m \right\}_{m=1}^{Q}$,
such that 
  $\sum_{m=1}^Q \tau_m \mathbf{\Psi}(\zeta_m) = \mathbf{e},$
which proves \cref{lemma:tch-temp}.

\bibliographystyle{plain}
\bibliography{bibfile}
\end{document}